\documentclass[a4paper,two-side,10pt]{article}
\usepackage[latin1]{inputenc}
\usepackage[T1]{fontenc}
\usepackage{amsmath}
\usepackage{amsthm}
\usepackage{latexsym}
\usepackage{amssymb}
\usepackage[all]{xy}
\usepackage{calc}
\usepackage{multicol}
\newtheorem{thm}{Theorem}[section]
\newtheorem{prop}{Proposition}[section]

\newtheorem{lem}{Lemma}[section]
\newtheorem{cor}{Corollary}[section]

\newtheorem{etape}{Step}
\newtheorem{rmq}{Remark}[section]

\newcommand{\R}{\mathbb{R}}
\numberwithin{equation}{section}
\newcommand{\N}{\mathbb{N}}

\newcounter{exercice}

\author{Jean-baptiste Castéras}
\title{ Equivariant mean field flow}
\begin{document}
\date{}
\maketitle
 \begin{center}
 \small{LMBA\\
 Universit\'e de Bretagne Occidentale\\
 6 av. Victor Le Gorgeu -- CS 93837\\
 29238 Brest Cedex\\
 France\\
 E-mail:  jean-baptiste.casteras@univ-brest.fr}
 \end{center}
\medskip
 \begin{abstract}
We consider a gradient flow associated to the mean field equation on $(M,g)$ a compact riemanniann surface without boundary. We prove that this flow exists for all time. Moreover, letting $G$ be a group of isometry acting on $(M,g)$, we obtain the convergence of the flow to a solution of the mean field equation under suitable hypothesis on the orbits of points of $M$ under the action of $G$.
 \end{abstract}
 \vspace{10pt}
 \begin{center} \textit{Key words} : Mean field equation, Geometric flow.
 \end{center}
 \begin{center} \textbf{AMS subjet classification} : 35B33, 35J20, 53C44, 58E20.
 \end{center}
\section{Introduction}
Let $(M,g)$ be a compact riemannian surface without boundary, we will study an evolution problem associated to the mean field equation :
\begin{equation}
\label{E:CM}
  \Delta u + \rho  \left( \dfrac{f  e^{u}}{\int_M f e^{u}dV} -\dfrac{1}{|M|}\right)=0, 
 \end{equation}
 where $\rho$ is a real parameter, $|M|$ stands for the volume of $M$ with respect to the metric $g$, $f\in C^\infty(M)$ is a given function supposed strictly positive and $\Delta$ is the Laplacian with respect to the metric $g$. The mean field equation appears in statistic mechanic from Onsager's vortex model for turbulent Euler flows. More precisely, in this setting, the solution $u$ of the mean field equation is the stream function in the infinite vortex limit (see \cite{MR1145596}). This equation is also linked to the study of condensate solutions of the abelian Chern-Simons-Higgs model (see for example \cite{MR1324400}, \cite{MR1955272}, \cite{MR1400816}, \cite{MR1838682}). Equation \eqref{E:CM} is also related to conformal geometry. When $(M,g)$ is the standard sphere and $\rho =8\pi$, the problem to find a solution to equation \eqref{E:CM} is called the Nirenberg Problem. The geometrical meaning of this problem is that, if $u$ is a solution of \eqref{E:CM}, the conformal metric $e^u g$ admits a gaussian curvature equal to $\dfrac{\rho f}{2}$.\newline
 
Equation (\ref{E:CM}) is the Euler-Lagrange equation of the nonlinear functional
 \begin{equation}
 \label{enerE:CM}
 I_\rho (u)=\frac{1}{2}\int_M |\nabla u|^2 dV +\frac{\rho}{|M|}\int_M udV -\rho \log \left( \int_M fe^u dV\right),\ u\in H^1(M). 
 \end{equation}
By using the well-known Moser-Trudinger's inequality (see inequality (\ref{equationmanquante2isocha})), one can easily obtain the existence of solutions of \eqref{E:CM} for $\rho <8\pi$ by minimizing $I_\rho$. Existence of solutions becomes much harder when $\rho\geq 8\pi$. In fact, in this case, the functional $I_\rho$ is not coercive. The existence of solutions to equation \eqref{E:CM} has been intensively studied these last decades when $\rho \geq 8\pi$. Many partial existence results have been obtained according to the value of $\rho$ and to the topology of $M$ (see for example \cite{MR2001443}, \cite{MR1712560}, \cite{MR2483132}, \cite{MR1619043} in the references therein). Recently, Djadli \cite{MR2409366} proves the existence of solutions to \eqref{E:CM} for all riemanniann surfaces when $\rho\neq 8k\pi$, $k\in \N^\ast$, by studying the topology of sublevels $\left\{I_\rho \leq -C\right\}$ to achieve a min-max scheme (already introduced in Djadli-Malchioldi \cite{MR2456884}).\newline

In this paper, we consider the evolution problem associated to the mean field equation, that is the following equation
\begin{equation}
\label{isoE:flot}
\left\{\begin{array}{ll}
\dfrac{ \partial}{\partial t}e^{u} =\Delta u + \rho  \left( \dfrac{f  e^{u}}{\int_M f e^{u}dV} -\dfrac{1}{|M|}\right)  \\
 u(x,0)=u_{0}(x),
\end{array}
\right.
\end{equation}
where $u_0 \in C^{2+\alpha}(M)$, $\alpha\in (0,1)$, is the initial data. It is a gradient flow with respect to the following functional :
\begin{equation}
\label{isoE:fonc}
E_f(u)= \frac{1}{2}\int_M|\nabla u|^2dV+\frac{\rho}{| M |}\int_M u dV- \rho \ln\left(\int_M f e^{u}dV\right), \ u\in H^1(M).
\end{equation}
We first prove the global existence of the flow \eqref{isoE:flot}. We obtain the following result :
\begin{thm}
\label{isoexglob}
For all $ u_0 \in C^{2+\alpha}(M)$ $(0<\alpha<1)$, all $\rho \in \R$ and all function $f\in C^\infty (M)$ strictly positive, there exists a unique global solution $u \in C_{loc}^{2+\alpha,1+\frac{\alpha}{2}}\left(M\times [0, + \infty)\right)$ of \eqref{isoE:flot}.
\end{thm}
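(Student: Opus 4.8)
The plan is to treat \eqref{isoE:flot} as a quasilinear, nonlocal parabolic equation and to prove global existence by a continuation argument: first establish short-time existence and uniqueness in $C^{2+\alpha,1+\frac{\alpha}{2}}$, then derive a priori bounds on every finite interval $[0,T]$ that are strong enough to rule out blow-up, so that the maximal existence time must be infinite. Since $\partial_t e^u = e^u u_t$, I would rewrite the flow as
\begin{equation*}
u_t = e^{-u}\Delta u + \rho\, e^{-u}\left(\frac{f e^u}{\int_M f e^u\,dV} - \frac{1}{|M|}\right),
\end{equation*}
which is uniformly parabolic as long as $u$ stays bounded, the coefficient $e^{-u}$ being positive. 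Local existence and uniqueness then follow from standard quasilinear parabolic theory; the nonlocal factor $\int_M f e^u\,dV$ is controlled by a contraction/fixed-point argument on a short time interval, observing that for $u$ close to $u_0$ this integral stays bounded away from $0$ and $\infty$.

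Two structural identities drive the estimates. First, integrating \eqref{isoE:flot} over $M$ and using $\int_M \Delta u\,dV = 0$ yields mass conservation,
\begin{equation*}
\int_M e^{u(\cdot,t)}\,dV = \int_M e^{u_0}\,dV =: m_0,
\end{equation*}
which pins the nonlocal denominator between $(\min_M f)\,m_0$ and $(\max_M f)\,m_0$, so the nonlocality is harmless. Second, \eqref{isoE:flot} is the gradient flow of $E_f$, and a direct computation gives $\frac{d}{dt}E_f(u) = -\int_M e^u u_t^2\,dV \le 0$, hence $E_f(u(t)) \le E_f(u_0)$ for all $t$. A one-sided $L^\infty$ bound then comes for free from the maximum principle: at a spatial maximum $\Delta u \le 0$, so the bounded denominator forces $\frac{d}{dt}(\max_M u) \le C$, giving $\sup_{M\times[0,T]} u \le \max_M u_0 + CT$ and thus a pointwise upper bound on $e^u$ on every finite interval.

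I expect the main obstacle to be the lower bound on $u$, equivalently an upper bound on the degenerating coefficient $e^{-u}$ needed for uniform parabolicity. The naive computation at a spatial minimum only gives $\frac{d}{dt}e^{\min u} \ge -\rho/|M|$, which does \emph{not} rule out $\min u \to -\infty$ in finite time; similarly, testing \eqref{isoE:flot} against $e^{-qu}$ produces a chain of moment inequalities coupling $\int_M e^{-qu}\,dV$ to $\int_M e^{-(q+1)u}\,dV$ that does not close. I would therefore obtain the lower bound by a genuinely parabolic argument: once $u$ is bounded above the diffusion is nondegenerate, and a Harnack-type inequality combined with the conserved mass $\int_M e^u\,dV = m_0 > 0$ should force $\inf_M e^{u}$ to stay positive on $[0,T]$. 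With both bounds in hand, $u \in L^\infty(M\times[0,T])$, the equation is uniformly parabolic with a Hölder-controlled right-hand side, and parabolic Schauder estimates bootstrap $u$ to a bound in $C^{2+\alpha,1+\frac{\alpha}{2}}(M\times[0,T])$. By the continuation criterion this precludes finite-time blow-up, so the solution extends to $[0,+\infty)$, while uniqueness propagates from the local uniqueness via the same contraction estimate.
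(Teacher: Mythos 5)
Your overall scheme (short-time existence, the conservation law \eqref{volconst}, the energy decay \eqref{decene}, and a continuation argument reducing everything to a priori bounds on $[0,T)$) matches the paper, and your maximum-principle upper bound on $u$ is correct as far as it goes. But the proof has a genuine gap exactly where you flag the "main obstacle": the lower bound on $u$, i.e.\ the upper bound on the coefficient $e^{-u}$. The Harnack argument you propose is circular. Writing the flow as $u_t = e^{-u}\Delta u + \dots$, an upper bound on $u$ bounds the diffusion coefficient $e^{-u}$ from \emph{below} only; uniform parabolicity, and hence the constants in any Moser or Krylov--Safonov type Harnack inequality, require a bound on the ellipticity ratio, which here is $e^{\sup u - \inf u}$ --- so invoking Harnack presupposes precisely the lower bound on $\inf u$ you are trying to prove. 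Equivalently, $w=e^u$ solves the logarithmic fast-diffusion equation $w_t = \Delta \log w + \rho\bigl(fw/\int_M fw\,dV - 1/|M|\bigr)$, for which persistence of positivity is the notoriously delicate point; conservation of the mass $\int_M w\,dV$ alone cannot give a pointwise lower bound (mass can concentrate while $w$ degenerates elsewhere). As written, the crucial step is an unproved assertion, and the specific tool named cannot deliver it.

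The paper circumvents pointwise bounds entirely and works with integral estimates, in an order you may find instructive: first a uniform $H^1(M)$ bound on $[0,T)$ (Proposition \ref{etape4}), whose key device is measure-theoretic rather than pointwise --- a Gronwall bound on $\int_M e^{2u(t)}dV$ shows that the set $A=\{e^{u(t)}\ge \varepsilon\}$ has measure bounded below by \eqref{6juin2012e40}, which traps $\bigl|\int_A u(t)\,dV\bigr|$ and lets Poincar\'e's inequality absorb the mean value, yielding \eqref{fardoune.g}; combined with the energy decay \eqref{decene} this closes the $H^1$ estimate. Next comes an $H^2(M)$ bound (Proposition \ref{etape5}) via a Gronwall argument for $\int_M(\Delta u(t))^2dV$ driven by the finiteness of the dissipation integral \eqref{iso51bis}, then space-time H\"older continuity \eqref{6juin2012e11} obtained by an elementary averaging argument over balls of radius $\sqrt{t_2-t_1}$, and only at that stage parabolic Schauder theory. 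In particular the $L^\infty$ bound on $u$ is an \emph{output} (Sobolev embedding from $H^2$), never an input; this is what makes the argument close, whereas any route that needs two-sided pointwise control before regularity runs into the circularity above. To repair your proof you would need either to carry out a Hamilton-style argument special to this structure (as for Ricci flow on surfaces), or to switch to energy estimates as the paper does.
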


Next, we investigated the convergence of the flow when the initial data and the function $f$ are invariant under an isometry group acting on $(M,g)$. A lot of works has been done for prescribed curvature problems invariant under an isometry group, we refer to \cite{MR2492192}, \cite{MR1216009}, \cite{MR0339258}, \cite{MR2807418} and the references therein. Before given a more precise statement of our results, we introduce some notations. Let $G$ be an isometry group of $\left(M,g\right)$. For all $x\in M$, we define $O_{G}(x)$ as the orbit of $x$ under the action of $G$, i.e.
$$O_{G}(x)=\left\{y \in M\ :\ y\in \sigma(x) \ ,\ \forall \sigma \in G\right\}.$$
$\left|O_{G}(x)\right|$ will stand for the cardinal of $O_{G}(x)$. We say that a function $f:M\rightarrow \R$ is $G$-invariant if $f\left(\sigma\left(x\right)\right)=f\left(x\right)$ for all $x\in M$ and $\sigma \in G$. We define $C_G^\infty (M)$ (resp. $C_G^{2+\alpha}(M)$, $\alpha\in (0,1)$) as the space of functions $f\in C^\infty (M)$ (resp. $f\in C_G^{2+\alpha}(M)$) such that $f$ is $G$-invariant. We prove the convergence of the flow under suitable hypothesis on $G$ allowing us also to handle the critical case when $\rho=8k\pi$, $k\in \N^\ast$.
\begin{thm}
\label{isoconvergence}
Let $G$ be an isometry group acting on $(M,g)$ such that
$$\left|O_{G}(x)\right|> \frac{\rho}{8\pi},\ \forall x \in M,$$
and $f\in C_G^\infty (M)$ be a strictly positive function. Then, for all initial $ u_0\in C_G^{2+\alpha}(M)$, the global solution $u\in C_{loc}^{2+\alpha,1+\frac{\alpha}{2}}(M\times [0,+\infty))$ of (\ref{isoE:flot}) converges in $H^2(M)$ to a function $u_\infty \in C_G^\infty (M)$ solution of the mean field equation (\ref{E:CM}).
\end{thm}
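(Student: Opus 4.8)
\noindent The plan is to exploit the gradient-flow structure of \eqref{isoE:flot} together with a Moser-Trudinger inequality sharpened on the $G$-invariant class. First I would check that $G$-invariance is preserved by the flow: since $f$ is $G$-invariant and each $\sigma\in G$ is an isometry, for every $\sigma$ the function $(x,t)\mapsto u(\sigma(x),t)$ solves \eqref{isoE:flot} with the same initial datum $u_0$, so the uniqueness part of Theorem \ref{isoexglob} forces $u(\sigma(x),t)=u(x,t)$ and hence $u(\cdot,t)\in C_G^{2+\alpha}(M)$ for all $t\ge 0$. This traps the whole trajectory in the class where the improved inequality holds. I would then record the two basic identities of the flow. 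Testing the equation against $\partial_t u$ and integrating by parts gives
\begin{equation*}
\frac{d}{dt}E_f(u(t))=-\int_M e^{u}\left(\partial_t u\right)^2\,dV\le 0,
\end{equation*}
so $E_f(u(t))$ is nonincreasing, while integrating the equation over $M$ gives $\frac{d}{dt}\int_M e^{u}\,dV=0$, so the mass $\int_M e^{u(t)}\,dV\equiv\int_M e^{u_0}\,dV$ is conserved.

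Next---and this is where the orbit hypothesis is used decisively---I would invoke the improved Moser-Trudinger inequality for $G$-invariant functions: if every orbit has cardinality at least $N$, then
\begin{equation*}
\log\int_M e^{u-\bar u}\,dV\le\frac{1}{16\pi N}\int_M|\nabla u|^2\,dV+C,\qquad \bar u=\frac{1}{|M|}\int_M u\,dV.
\end{equation*}
Since $|O_G(x)|>\rho/(8\pi)$ for all $x$, the minimal orbit cardinality $N$ satisfies $8\pi N>\rho$, and the inequality makes $E_f$ coercive on the $G$-invariant functions of fixed mass; note that in the critical case $\rho=8k\pi$ one has $N\ge k+1$, so $8\pi N\ge 8\pi(k+1)>\rho$ and coercivity still holds strictly. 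Combined with the monotonicity and mass conservation above, this yields a uniform-in-time bound $\|u(t)\|_{H^1(M)}\le C$.

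I would then bootstrap. The uniform $H^1$ bound, together with the Moser-Trudinger inequality, controls $fe^u/\int_M fe^u\,dV$ in $L^p(M)$ for every $p$ uniformly in $t$; a Moser iteration gives a uniform $L^\infty$ bound on $u$, after which parabolic $L^p$ and Schauder estimates applied to \eqref{isoE:flot} upgrade this to a uniform bound on $u(t)$ in $C^{2+\alpha}(M)$. Integrating the energy identity in time gives $\int_0^\infty\int_M e^{u}(\partial_t u)^2\,dV\,dt=E_f(u_0)-\lim_{t\to\infty}E_f(u(t))<\infty$, so there is a sequence $t_n\to\infty$ with $\int_M e^{u(t_n)}(\partial_t u(t_n))^2\,dV\to 0$. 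By the uniform $C^{2+\alpha}$ bound and compactness, a subsequence of $u(t_n)$ converges in $C^2(M)$ to some $u_\infty\in C_G^{2+\alpha}(M)$; passing to the limit in the equation shows $u_\infty$ solves the stationary equation \eqref{E:CM}, and elliptic regularity gives $u_\infty\in C_G^\infty(M)$.

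Finally, to upgrade subsequential convergence to convergence of the entire trajectory in $H^2(M)$, I would appeal to a Lojasiewicz-Simon gradient inequality for the real-analytic functional $E_f$ near $u_\infty$, working on the slice $\{\int_M e^{u}\,dV=\int_M e^{u_0}\,dV\}$ to factor out the invariance of $E_f$ under additive constants. This inequality, inserted into the energy dissipation estimate, shows that $t\mapsto u(t)$ has finite length in $H^1$, hence converges; the uniform higher-order bounds then promote this to convergence in $H^2$. I expect this last step---establishing the Lojasiewicz-Simon inequality in the present degenerate, constant-invariant setting and deducing convergence of the full flow---to be the main obstacle, with the verification of the improved Moser-Trudinger coercivity (which is what rules out concentration and forces the uniform bounds) being the other essential ingredient.
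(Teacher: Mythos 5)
Your first half matches the paper: preservation of $G$-invariance by uniqueness, the energy dissipation identity, mass conservation, and the use of the improved Moser--Trudinger inequality for $G$-invariant functions (Proposition \ref{iil1.2}, with $k=\min_{x\in M}|O_G(x)|$ so that $\rho<8k\pi$, which is how the paper absorbs the critical values $\rho=8k\pi$) to get the uniform $H^1$ bound --- this is exactly Proposition \ref{propisoh1}. The endgame is also the same: the paper obtains full-trajectory convergence by citing Simon's theorem for real-analytic gradient flows \cite{MR727703}, so that step, which you flag as the main obstacle, is not where the real difficulty lies.

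The genuine gap is your bootstrap step: ``a Moser iteration gives a uniform $L^\infty$ bound on $u$, after which parabolic $L^p$ and Schauder estimates upgrade this to a uniform $C^{2+\alpha}$ bound.'' The flow reads $e^{u}\partial_t u=\Delta u+\rho\bigl(\tfrac{fe^{u}}{\int_M fe^{u}dV}-\tfrac{1}{|M|}\bigr)$, and the coefficient $e^{u}$ multiplying $\partial_t u$ is precisely what you do not control: the uniform bounds $\int_M e^{pu(t)}dV\le C$ for every finite $p$ (which do follow from the $H^1$ bound and Moser--Trudinger) are far from uniform parabolicity, and every parabolic regularity device you invoke --- Moser iteration, $L^p$ theory, Schauder --- requires two-sided pointwise bounds on this coefficient, i.e.\ the very $L^\infty$ bound you are trying to produce; the argument is circular. (The Gronwall-type $H^2$ estimate of Section 2 cannot be recycled either: its constant grows exponentially in time.) The workable route is elliptic, at fixed time: from the equation, $\int_M e^{-u(t)}\bigl(\Delta u(t)-\tfrac{\rho}{|M|}\bigr)^2dV\le C\,y(t)+C$ with $y(t)=\int_M e^{u(t)}(\partial_t u(t))^2dV$, and H\"older against $\int_M e^{3u(t)}dV\le C$ gives $\|\Delta u(t)\|_{L^{3/2}(M)}\le C(y(t)+1)^{3/4}$, whence $u(t)\in W^{2,3/2}\hookrightarrow L^\infty$ in dimension two. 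But energy dissipation only yields $\int_0^\infty y(t)\,dt<\infty$: $y$ could spike, so there is no pointwise-in-time bound a priori. Supplying one is the heart of the paper's proof (following Brendle): assuming $y(t_0)\le\varepsilon$ and $y(t_1)=3\varepsilon$, it derives on $[t_0,t_1]$ the differential inequality $y'\le C_1y^2+Cy$ --- whose proof needs the lower bound \eqref{raleygh} on the first eigenvalue of the Laplacian of the conformal metric $e^{u(t)}g$, which is where the provisional $L^\infty$ bound enters --- and integrating it contradicts the integrability of $y$ once $t_0$ is large. This gives $y(t)\to0$ for the \emph{full} limit, hence $|u(t)|\le C$ and $\|u(t)\|_{H^2(M)}\le C$ for all $t\ge0$, and only then the sequential limit $u_\infty$, the strong $H^2$ convergence along $t_n$, and Simon's theorem. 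Your choice of $t_n$ with $y(t_n)\to0$ gives control only along that subsequence, which is insufficient both for the uniform bounds and for running the Lojasiewicz--Simon argument, whose underlying $L^2(e^{u}dV)$ gradient structure itself requires uniform control of $u$.
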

Assuming that $f$ is a positive constant and $G=Isom(M,g)$, the group of all isometry of $(M,g)$, we obtain :
\begin{cor}
Suppose that for all $x\in M$, we have $|O_G (x)|=+\infty$ then, for all $\rho\in \R$, the solution of the flow (\ref{isoE:flot}) converges in $H^2(M)$ to a function $u_\infty\in C^\infty_G(M)$ solution of the mean field equation (\ref{E:CM}).
\end{cor}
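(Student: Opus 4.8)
The plan is to deduce this statement directly from Theorem \ref{isoconvergence}, whose conclusion is exactly the one we want; the only work is to check that its two hypotheses hold in the present setting. First I would note that a positive constant is trivially invariant under any isometry group, so that $f\in C_G^\infty(M)$ and the hypothesis on $f$ in Theorem \ref{isoconvergence} is met. As in that theorem we take the initial datum $u_0$ in $C_G^{2+\alpha}(M)$, so that the flow \eqref{isoE:flot} issued from $u_0$ remains $G$-invariant for all time and, in particular, its $H^2$-limit will automatically lie in $C_G^\infty(M)$.

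Next I would verify the orbit condition. Fix any $\rho\in\R$. Since $\rho/(8\pi)$ is a finite real number whereas, by hypothesis, $|O_G(x)|=+\infty$ for every $x\in M$, we have
\begin{equation*}
|O_G(x)|=+\infty>\frac{\rho}{8\pi}\qquad\text{for all }x\in M.
\end{equation*}
Thus the orbit hypothesis of Theorem \ref{isoconvergence} is satisfied, and, crucially, it is satisfied for every value of $\rho$ with no restriction whatsoever --- this is the whole point of assuming that all orbits are infinite, a situation realized when $G=\mathrm{Isom}(M,g)$ acts with no finite orbit.

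With both hypotheses in hand I would simply invoke Theorem \ref{isoconvergence} to conclude that the global solution $u\in C_{loc}^{2+\alpha,1+\frac{\alpha}{2}}(M\times[0,+\infty))$ converges in $H^2(M)$ to some $u_\infty\in C_G^\infty(M)$ solving \eqref{E:CM}. I do not expect any genuine obstacle here beyond the bookkeeping above: all the analytic difficulty --- the global existence provided by Theorem \ref{isoexglob} and the blow-up analysis that rules out concentration of a $G$-invariant solution on its necessarily infinite orbit --- is already absorbed into Theorem \ref{isoconvergence}. The sole conceptual content of the corollary is the observation that infinite orbits render the inequality $|O_G(x)|>\rho/(8\pi)$ vacuously true for every $\rho$, thereby removing the critical-value restriction $\rho\neq 8k\pi$ altogether.
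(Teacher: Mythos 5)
Your proposal is correct and is exactly the intended derivation: the paper states this corollary without separate proof precisely because, once one observes that a positive constant $f$ is $G$-invariant and that $|O_G(x)|=+\infty>\rho/(8\pi)$ holds vacuously for every $\rho\in\R$ (with $u_0\in C_G^{2+\alpha}(M)$ as in the theorem), it is an immediate application of Theorem \ref{isoconvergence}. No gaps.
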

\begin{rmq}
Taking  $M=\mathbb{S}^1\times \mathbb{S}^1$ endowed with the product metric and $G=Isom (M,g)$, we have, for all $x\in M$, $|O_G(x)|=+\infty$.
\end{rmq}
If $f$ isn't constant, we also have :
\begin{cor}
If $\rho< 16\pi$ and $f\in C^\infty (\mathbb{S}^2)$ is an even function then the flow (\ref{isoE:flot}) converges in $H^2(\mathbb{S}^2)$ to an even function $u_\infty \in C^\infty (\mathbb{S}^2)$ solution of the mean field equation.
\end{cor}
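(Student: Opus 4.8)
The plan is to deduce this final corollary from Theorem~\ref{isoconvergence} by exhibiting an isometry group $G$ on the standard sphere $\mathbb{S}^2$ whose orbits all have cardinality strictly greater than $\rho/8\pi$, together with a setting in which the prescribed function $f$ and the initial data can be made $G$-invariant. Since $\rho<16\pi$ means $\rho/8\pi<2$, Theorem~\ref{isoconvergence} will apply as soon as we find a group all of whose orbits have at least three points (so $|O_G(x)|\geq 3>2>\rho/8\pi$), or at the very least strictly more than two points. The natural candidate dictated by the phrase ``even function'' is the antipodal action: take $G=\{\mathrm{id},\,x\mapsto -x\}$, the group of order two generated by the antipodal map on $\mathbb{S}^2\subset\R^3$. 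A function $f$ on $\mathbb{S}^2$ is even precisely when $f(-x)=f(x)$, i.e. exactly when $f$ is $G$-invariant for this $G$; the antipodal map is an isometry of the round sphere, so $G$ is indeed an isometry group.

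The obstruction is immediate, though: the antipodal map is fixed-point-free and has order two, so every orbit has exactly two points, giving $|O_G(x)|=2$ for all $x$, which fails the strict inequality $|O_G(x)|>\rho/8\pi$ once $\rho$ is close to $16\pi$. Hence I would not use the order-two group alone; instead I would enlarge it. The key step is to choose $G$ to be a larger finite (or infinite) isometry group of $\mathbb{S}^2$ that contains the antipodal map, so that $G$-invariance still forces (indeed is implied by) evenness in the relevant direction while every orbit now has cardinality $\geq 3$. A clean choice is to take $G$ generated by the antipodal map together with a rotation of infinite order (an irrational rotation about some axis): then every orbit is infinite, so $|O_G(x)|=+\infty>\rho/8\pi$ trivially, and the invariance condition $f\circ\sigma=f$ for all $\sigma\in G$ is a restriction. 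The difficulty here is that demanding full $G$-invariance under such a large group is much stronger than merely asking $f$ to be even, so this does not directly prove a statement about all even $f$.

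To handle an arbitrary even $f$, I would instead exploit that the flow \eqref{isoE:flot} with even initial data and even $f$ stays in the space of even functions, and that the even functions on $\mathbb{S}^2$ descend to functions on the quotient $\mathbb{S}^2/\{\pm 1\}=\R P^2$. The plan is therefore to push the problem down to $\R P^2$ (or, equivalently, to work on $\mathbb{S}^2$ restricted to the even subspace) and recognize that, because $\R P^2$ has half the volume and the even Moser--Trudinger threshold is effectively doubled, the critical mass for blow-up for even functions is $16\pi$ rather than $8\pi$. Concretely, the uniqueness part of Theorem~\ref{isoexglob} guarantees the flow preserves the symmetry $u(-x,t)=u(x,t)$ whenever $u_0$ is even and $f$ is even, so $u(\cdot,t)$ remains even for all time. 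I would then re-run the convergence argument of Theorem~\ref{isoconvergence} in this even setting: the energy $E_f$ is bounded along the flow, and the only possible obstruction to convergence is concentration at a point; but an even concentration profile must concentrate simultaneously at $x_0$ and $-x_0$, so the local mass $\rho$ splits into at least two equal bubbles, each carrying at least $8\pi$, forcing $\rho\geq 16\pi$.

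The main obstacle, and the heart of the proof, is precisely this blow-up analysis: showing that along a non-converging even flow any concentration is forced to occur at an antipodal pair, so that the total mass is at least $2\times 8\pi=16\pi$, contradicting $\rho<16\pi$. This is the even-symmetry analogue of the mechanism already used to prove Theorem~\ref{isoconvergence}, where the hypothesis $|O_G(x)|>\rho/8\pi$ serves exactly to rule out concentration by forcing $|O_G(x)|$ simultaneous bubbles each of mass $8\pi$. Thus I expect the corollary to follow by taking $G$ to be the order-two antipodal group, observing that its single fixed-point-free involution forces concentration to occur at antipodal pairs (effectively giving an orbit-size-two obstruction that is enough precisely in the strict range $\rho<16\pi$), and invoking the convergence machinery of Theorem~\ref{isoconvergence} specialized to this symmetry; the delicate point to verify carefully is the quantization that each bubble carries mass exactly $8\pi$, which is the standard but nontrivial ingredient from the concentration-compactness theory for the mean field equation.
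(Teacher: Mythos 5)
Your opening instinct is exactly the paper's argument, and the ``obstruction'' you then found is not real: it comes from misreading the direction of the inequality in Theorem~\ref{isoconvergence}. With $G=\{\mathrm{id},\,x\mapsto -x\}$ the antipodal group, every orbit has exactly two points (the antipodal map is fixed-point free on $\mathbb{S}^2$), so the hypothesis $|O_G(x)|>\rho/(8\pi)$ for all $x$ reads $2>\rho/(8\pi)$, i.e.\ $\rho<16\pi$ --- which is precisely the assumption of the corollary, including $\rho$ arbitrarily close to $16\pi$. The strict inequality fails only at $\rho=16\pi$, and that value is excluded. Evenness of $f$ (and of the initial data $u_0$, implicitly taken even) is exactly $G$-invariance for this $G$, and strict positivity of $f$ is a standing assumption of the paper; hence Theorem~\ref{isoconvergence} applies verbatim with $k=\min_{x}|O_G(x)|=2$ and immediately yields convergence in $H^2(\mathbb{S}^2)$ to an even smooth solution $u_\infty$ of \eqref{E:CM}. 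That one-line application is the intended proof; nothing more is needed.

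Because you dismissed this route, the remainder of your proposal drifts into arguments that are both unnecessary and not actually carried out. Enlarging the group by an irrational rotation is, as you yourself note, incompatible with treating arbitrary even $f$. The proposed blow-up analysis on $\mathbb{S}^2/\{\pm 1\}$ --- concentration forced at antipodal pairs, each bubble carrying mass exactly $8\pi$ --- is only sketched, would require substantial work (it is not proved anywhere in the paper), and moreover misrepresents how Theorem~\ref{isoconvergence} is actually established: the mechanism there is not concentration-compactness or bubble quantization, but the improved Moser--Trudinger inequality of Proposition~\ref{iil1.2}, which for $G$-invariant functions lowers the constant from $\frac{1}{16\pi}$ to $\frac{1}{16k\pi}+\varepsilon$ and hence gives coercivity of $E_f$ and a uniform $H^1$ bound along the flow whenever $\rho<8k\pi$ (here $=16\pi$). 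The correct fix is simply to delete the detour and invoke the theorem with the order-two antipodal group.
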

The plan of this paper is the following : in Section $2$, we will prove Theorem \ref{isoexglob}. In Section $3$, we will give an improved Moser-Trudinger inequality for $G$-invariant functions. In Section $4$, we establish Theorem \ref{isoconvergence} : first, using our improved Moser-Trudinger inequality, we obtain a uniform (in time) $H^1(M)$ bound for the solution $u(t)$ of \eqref{isoE:flot} where $u(t): M\rightarrow \R$ is defined by $u(t)(x)=u(x,t)$. Then, from the previous estimate, we will derive a uniform $H^2(M)$ bound. Theorem \ref{isoconvergence} will follow from this last estimate.
\vspace{10pt}

\section {Proof of Theorem \ref{isoexglob}.}
In this section, we prove the global existence of the flow \eqref{isoE:flot}. We begin by noticing that, since the flow \eqref{isoE:flot} is parabolic, standard methods provide short time existence and uniqueness for it. Thus, there exists $T_1>0$ such that $u\in C^{2+\alpha,1+\frac{\alpha}{2}}(M\times [0,T_1])$ is a solution of the flow. It is also easy to see, integrating \eqref{isoE:flot} on $M$, that, for all $t\in [0,T_1]$, we have
\begin{equation}
\label{volconst}
\int_M e^{u(t)}dV=\int_M e^{u_0}dV.
\end{equation}
We also notice that the functional $E_f(u(t))$ is decreasing with respect to $t$. Differentiating $E_f (u(t))$ with respect to $t$ and integrating by parts, one finds, for all $t\in [0,T_1]$,
\begin{equation}
\label{isodecen}
\dfrac{\partial}{\partial t}E_f(u(t))= -\int_M \left( \dfrac{\partial u(t)}{\partial t} \right)^2 e^{u(t)}dV \leqslant 0 .
\end{equation}
Therefore, if $0\leq t_0\leq t_1\leq T_1$, we have 
\begin{equation}
\label{decene}
 E_f(u(t_1))\leqslant E_f(u(t_0)).
 \end{equation}
To prove Theorem \ref{isoexglob}, we set $$T=\sup\left\{\overline{T}>0 :\ \exists \ u \in C_{loc}^{2+\alpha, 1+\frac{\alpha}{2}}(M\times [0,\overline{T}])\ solution\ of\  (\ref{isoE:flot})\right\},$$
and suppose that $T<+\infty$. From the definition of $T$, we have that $u\in C_{loc}^{2+\alpha, 1+\frac{\alpha}{2}}(M\times [0,T))$. We will show that there exists a constant $C_T>0$ depending on $T$, $M$, $f$, $\rho$ and $\left\|u_0\right\|_{C^{2+\alpha}(M)}$ such that
\begin{equation}
\label{28juin2012e3}
\left\|u\right\|_{C^{2+\alpha, 1+\frac{\alpha}{2}}(M\times [0,T))}\leq C_T.
\end{equation}
This estimate allows us to extend $u$ beyond $T$, contradicting the definition of $T$. 
In the following, $C$ will denote constants depending on $M$, $f$, $\rho$ and $\left\|u_0\right\|_{C^{2+\alpha}(M)}$ and $C_T$ the ones depending on $M$, $f$, $\rho$, $\left\|u_0\right\|_{C^{2+\alpha}(M)}$ and $T$. 
\newcounter {EqNo}
 \setcounter{EqNo}{0}
\newcommand{\Numeq}{\refstepcounter{EqNo}  \hfill ( \thesection.\theEqNo)\\ }
\newcommand{\refeq}[1]{(\thesection.\ref{#1})}
\noindent 
\begin{prop}
\label{etape4}
There exists a constant $C_T$ such that, for all $t\in [0,T)$, we have
 \begin{equation}
 \label{etape4e}
 \left\|u(t)\right\|_{H^1 (M)}\leq C_T.
 \end{equation}
 \end{prop}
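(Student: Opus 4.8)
The plan is to squeeze the $H^1$ bound out of the two facts already established: the energy is non-increasing along the flow, so by \eqref{decene} we have $E_f(u(t))\le E_f(u_0)$ for every $t\in[0,T)$, and the volume is conserved, so by \eqref{volconst} we have $\int_M e^{u(t)}dV=\int_M e^{u_0}dV=:m$ for all $t$. Because $f$ is continuous and strictly positive on the compact manifold $M$, volume conservation traps the denominator of the nonlinearity: $(\min_M f)\,m\le \int_M fe^{u(t)}dV\le(\max_M f)\,m$, so that $\bigl|\ln\int_M fe^{u(t)}dV\bigr|\le C$ with $C$ independent of $t$. This two-sided control of the logarithmic term is what will let the finite-time estimate go through.

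I would next reduce the $H^1$ bound to a single bound on $\|\nabla u(t)\|_{L^2(M)}$. Setting $\overline{u}=\frac1{|M|}\int_M u\,dV$ and reading off from \eqref{isoE:fonc} the identity $\frac12\int_M|\nabla u|^2dV=E_f(u)-\rho\,\overline{u}+\rho\ln\int_M fe^{u}dV$, I note that Jensen's inequality gives the upper bound $\overline{u}\le\ln(m/|M|)$, while the Moser--Trudinger inequality \eqref{equationmanquante2isocha}, applied together with $\int_M e^{u}dV=m$, gives the matching lower bound $\overline{u}\ge\ln m-\frac1{16\pi}\int_M|\nabla u|^2dV-C$. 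Combined with the Poincaré inequality, these show that a bound on $\int_M|\nabla u|^2dV$ controls $\overline{u}$ on both sides and hence the full norm $\|u(t)\|_{H^1(M)}$; so everything comes down to estimating the gradient.

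For $\rho\le 0$ the gradient estimate is immediate, since $-\rho\,\overline{u}$ and $\rho\ln\int_M fe^{u}dV$ are then both bounded above, forcing $\frac12\int_M|\nabla u|^2dV\le E_f(u_0)+C$. For $0<\rho<8\pi$ I would insert the lower bound for $\overline{u}$ into the identity above to get $\bigl(\tfrac12-\tfrac{\rho}{16\pi}\bigr)\int_M|\nabla u|^2dV\le E_f(u_0)+C$, which bounds the gradient --- in fact uniformly in $T$ --- exactly because the coefficient is positive below the threshold $\rho=8\pi$.

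The genuine difficulty, and the reason the constant is allowed to depend on $T$, is the supercritical range $\rho\ge8\pi$: there the coefficient $\frac12-\frac{\rho}{16\pi}$ is non-positive and the absorption fails. This is not a defect of the bookkeeping, since the constant $\frac1{16\pi}$ in \eqref{equationmanquante2isocha} is sharp; the energy bound \eqref{decene} alone cannot prevent $\int_M|\nabla u|^2dV$ from diverging through concentration. The content of the proposition here is that concentration cannot happen \emph{in finite time}, and to capture this I would use the equation \eqref{isoE:flot} itself, not merely its gradient-flow structure. Concretely, I would test \eqref{isoE:flot} against $e^{(p-1)u}$ and integrate by parts: the diffusion term is dissipative, and the nonlinear term is controlled using the lower bound on $\int_M fe^{u}dV$, which yields a differential inequality of the form $\frac{d}{dt}\int_M e^{pu}dV\le C\int_M e^{pu}dV$. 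Integrating it gives the finite-time moment bound $\|e^{u(t)}\|_{L^p(M)}\le C_T$ for every $p<\infty$, ruling out upward concentration before time $T$. I would then feed this bound, together with the energy estimate and volume conservation, into a parabolic bootstrap controlling $\overline{u}$ from below and the gradient from above, thereby producing $\|u(t)\|_{H^1(M)}\le C_T$. I expect this last passage --- turning the finite-time moment and smoothness information into a quantitative gradient bound uniform up to $T$ --- to be the main obstacle, precisely because the variational estimate is sharp and gives nothing in the supercritical regime.
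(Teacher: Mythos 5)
There is a genuine gap, and you locate it yourself: everything hinges on the supercritical range $\rho\geq 8\pi$, and there your argument stops at exactly the point where the paper's proof begins. Your treatment of $\rho\le 0$ and $0<\rho<8\pi$ is correct (and in fact gives bounds uniform in $T$, which the paper only needs later, in Proposition \ref{propisoh1}), and your Gronwall computation giving $\int_M e^{pu(t)}dV\le C_T$ is sound --- it is precisely the paper's estimate \eqref{ee21.2}, proved there for $p=2$. But the passage from this moment bound to the $H^1$ bound is not a routine ``parabolic bootstrap,'' and gesturing at one does not close the argument; note also that the moment bound by itself cannot control $\bar u$ from below, since $\int_M e^{pu}dV$ only penalizes large \emph{positive} values of $u$, while the danger in the energy inequality is $\bar u\to-\infty$.

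The missing idea is how the moment bound interacts with volume conservation \eqref{volconst}. Writing $a=\int_M e^{u_0}dV$ and $M_\varepsilon=\{e^{u(t)}<\varepsilon\}$ with $\varepsilon=a/(2|M|)$, H\"older gives
\begin{equation*}
a\le \varepsilon|M_\varepsilon|+|M\setminus M_\varepsilon|^{\frac12}\Bigl(\int_M e^{2u(t)}dV\Bigr)^{\frac12},
\end{equation*}
so \eqref{ee21.2} forces $|M\setminus M_\varepsilon|\ge (a/2C_T)^2>0$: the conserved mass of $e^{u(t)}$ cannot concentrate, hence there is a set $A=M\setminus M_\varepsilon$ of measure bounded below (in terms of $C_T$) on which $u(t)\ge\ln\varepsilon$ pointwise, while $\int_A u(t)\,dV\le\int_A e^{u(t)}dV\le a$ from above; thus $\bigl|\int_A u(t)\,dV\bigr|\le C_T$. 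Splitting $\int_M u\,dV=\int_A u\,dV+\int_{M\setminus A}u\,dV$ inside the Poincar\'e inequality and absorbing via Cauchy--Schwarz and Young (possible because $|M\setminus A|/|M|<1$ is bounded away from $1$) yields the paper's key inequality \eqref{fardoune.g}, $\int_M u^2(t)dV\le C_1\int_M|\nabla u(t)|^2dV+C_2$, with \emph{no} mean-value term. Plugged into the energy inequality, the term $\rho\bar u(t)$ is then controlled by $\varepsilon\int u^2+C\le \varepsilon C_1\int|\nabla u|^2+C$, i.e.\ with an arbitrarily small coefficient in front of the gradient --- in contrast to the Moser--Trudinger bound, whose coefficient $\rho/16\pi$ is immovable --- so the absorption works for \emph{every} $\rho\in\R$ simultaneously, with no case distinction and no use of Moser--Trudinger at all. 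This concentration-compactness step (non-concentration of $e^u$ $\Rightarrow$ positive-measure set where $u$ is bounded below $\Rightarrow$ absorbable control of the mean) is the substance of the proposition, and it is absent from your proposal.
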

\begin{proof}
First, we claim that, for all $t\in [0,T)$, we have
 \begin{equation}
 \label{fardoune.g}
\int_M u^2(t)dV \leqslant C_1\int_M|\nabla u(t)|^2 dV +C_2,
\end{equation}
where $C_1$, $C_2$ are two constants depending on $T,\ f,\ \rho,\ \left\|u_0\right\|_{C^{2+\alpha}(M)},\ M$ and $A$ (where $A$ is the set defined in the following). Fix $t\in [0,T)$ and set  $$M_\varepsilon = \left\{x \in M : e^{u(x,t)}<\varepsilon\right\},$$
\noindent where $\varepsilon > 0$ is a real number which will be determined later. We set $\displaystyle\int_M e^{u_0}dV =a$. 
Using H\"{o}lder's inequality and \eqref{volconst}, one has
\begin{eqnarray}
\label{ee21.1}
a= \int_{M}e^{u(t)}dV & =& \int_{M_{\varepsilon}}e^{u(t)}dV+\int_{M\backslash M_{\varepsilon}} e^{u(t)}dV \nonumber\\
& \leq & \varepsilon |M_{\varepsilon}|+ \left|M\backslash M_{\varepsilon}\right|^{\frac{1}{2}} \left(\int_{M}e^{2u(t)}dV\right)^{\frac{1}{2}}.
\end{eqnarray}
Now, differentiating $\displaystyle\int_{M}e^{2u(t)}dV$  with respect to $t$, we get 
\begin{eqnarray}
\label{fardoun 2eme etape3.2}
\frac{1}{2}\frac{\partial}{\partial t}\left(\int_{M}e^{2u(t)}dV\right) & =& \int_{M}\Delta u(t) e^{u(t)}dV-\dfrac{\rho}{|M|}  \int_{M}e^{u(t)}dV+\dfrac{\rho \int_{M}fe^{2u(t)}dV}{\int_{M}fe^{u(t)}dV}\nonumber \\
& \leq &- \int_{M}|\nabla u(t)|^{2}e^{u(t)}dV +C +\dfrac{\rho \displaystyle\max_{x\in M}f(x)}{a\displaystyle\min_{x\in M}f(x)}\int_{M}e^{2u(t)}dV \nonumber\\
& \leq & C\int_{M}e^{2u(t)}dV+C,\  \forall t\in [0,T).
\end{eqnarray}
This yields to
\begin{equation}
\label{ee21.2}
\int_{M}e^{2u(t)}dV\leq C_T\ ,\ \forall t\in [0,T).
\end{equation}
From (\ref{ee21.1}), (\ref{ee21.2}) and taking $\varepsilon=\frac{a}{2|M|}$, we deduce that
\begin{equation}
\label{6juin2012e40}
|M\backslash M_{\varepsilon}|\geq \left(\dfrac{a}{2C_T} \right)^2 >0.
\end{equation}
Using Poincaré's inequality, we have  
 \begin{equation}
 \label{fardoun2.87bis}
 \int_M u^{2}(t)dV  \leq \frac{1}{\lambda_1}\int_M |\nabla u(t)|^2dV+ \dfrac{1}{|M|}\left(\int_M u(t)dV\right)^2,
 \end{equation}
 where $\lambda_1$ is the first eigenvalue of the laplacian. We set $A=M\backslash M_\varepsilon$. From \eqref{6juin2012e40} and since, for all $x\in M_1$ and $0\leq t<T$, 
$u(x,t)\geq \ln \varepsilon= \ln \left(\dfrac{a}{2|M|}\right),$ we deduce that there exists a constant $C_T$ such that
\begin{equation}
\label{rmqe1}
\int_{A}u(t)dV \geq  C_T.
\end{equation}
On the other hand, using \eqref{volconst}, we have 
$$\int_{A}u(t) dV \leq \int_{A}e^{u(t)} dV \leq a.$$ 
We deduce from the previous inequality and (\ref{rmqe1}) that there exists a constant $C_T$ such that
\begin{equation}
\label{rmqe2}
\left|\int_{A}u(t)dV  \right|\leq C_T.
\end{equation}
Now, using (\ref{rmqe2}) and Young's inequality, we have  

\begin{eqnarray}
\label{eg2.11bis}
&&\dfrac{1}{|M|}\left(\displaystyle \int_M u(t)dV \right)^2 \nonumber \\
& =& \dfrac{1}{|M|}\left( \displaystyle\int_{A} u(t)dV+\displaystyle\int_{M \setminus A} u(t) dV \right)^2 \nonumber\\
 &\leq &  \dfrac{1}{|M|} \left( C_T+2C_T \left|\displaystyle\int_{M \setminus A} u(t) dV\right| +\left(\displaystyle\int_{M \setminus A} u(t) dV \right)^2\right)\nonumber\\
&\leq & C_T+ \frac{2C_T \varepsilon_1+1}{|M|}\left(\int_{M\backslash A}u(t)dV\right)^{2},   
\end{eqnarray}
 where $\varepsilon_1$ is a strictly positive constant wich will be determined later. Using Cauchy-Schwarz's inequality, we obtain 
 \begin{equation}
 \label{eg2.12bis}
 \left(\int_{M\backslash A}u(t)dV\right)^{2} \leq  |M \backslash A| \int_{M\backslash A}u^2(t)dV.
 \end{equation}
\noindent Combining (\ref{fardoun2.87bis}), (\ref{eg2.11bis}) and (\ref{eg2.12bis}), we find

 \begin{eqnarray}
 \int_M u^2(t) dV  &\leq & \frac{1}{\lambda_1}\int_M |\nabla u(t)|^2 dV+C_T \nonumber\\
 &+& \left(1-\frac{ |A|}{|M|}+ 2C_T\varepsilon_1 \frac{|M\backslash A|}{|M|}\right) \int_{M \backslash A}u^2(t) dV.\nonumber
\end{eqnarray}
 Choosing $\varepsilon_1$ such that $\alpha= \left(1- \frac{|A|}{|M|}+ 2C_T\varepsilon_1 \frac{|M\backslash A|}{|M|}\right)<1,$ we have
 $$\left(1-\alpha \right)\int_M u^2(t) dV \leq \frac{1}{\lambda_1}\int_M |\nabla u(t)|^2 dV +C_T.$$
This shows that inequality (\ref{fardoune.g}) holds. From \eqref{volconst} and \eqref{decene}, we have
\begin{equation}
\label{eg2.20}
C_0:=E_f (u_0) \geq E_f (u(t))\geq \frac{1}{2}\int_M |\nabla u(t)|^2 dV +\dfrac{\rho}{|M|} \int_M  u(t)dV - C. 
\end{equation}
Using Young's inequality, we obtain
$$\frac{1}{2}\int_M |\nabla u(t)|^2 dV \leq C_0 + C+ \dfrac{\rho}{\varepsilon}+\varepsilon \int_M u^2(t) dV,$$
where $\varepsilon$ is a positive constant which will be determined later.
\noindent By (\ref{fardoune.g}), we have
$$ \frac{1}{2}\int_M |\nabla u(t)|^2 dV \leq C' +\varepsilon C_1  \int_M |\nabla u(t)|^2 dV,$$
where $C'=C_0 +C+ \dfrac{\rho}{\varepsilon}+ C_2$ and $C_1$, $C_2$ are the constants of \eqref{fardoune.g}. We choose $\varepsilon$ such that $\dfrac{1}{2}- \varepsilon C_1>0.$
\noindent Consequently, for all $t\in [0,T)$, we derive that
\begin{equation}
\label{fardoun2emeversionetoile}
\int_M |\nabla u(t)|^2 dV \leq C_T.
\end{equation}
Using once more (\ref{fardoune.g}) and (\ref{fardoun2emeversionetoile}), we have $\displaystyle\int_M u^2(t) dV\leq C_T$. Finally, we conclude that there exists a constant $C_T>0$ such that
$$\left\|u(t)\right\|_{H^1(M)}\leq C_T,\ \forall \ t\in [0,T).$$
\end{proof}

\begin{prop}
\label{etape5}
For all $\rho \in \R$, there exists a constant $C_T$ such that, for all $t\in [0,T)$, we have
 \begin{equation*}
 \left\|u(t)\right\|_{H^2 (M)}\leq C_T.
 \end{equation*}
 \end{prop}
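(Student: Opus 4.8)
The plan is to deduce the $H^2$ bound from an $L^2$ control of $\Delta u(t)$, using the flow to express $\Delta u$ through the time derivative. Writing \eqref{isoE:flot} as
\begin{equation*}
\Delta u(t) = e^{u(t)}\,\partial_t u(t) - \rho\left(\frac{f e^{u(t)}}{\int_M f e^{u(t)}\,dV} - \frac{1}{|M|}\right),
\end{equation*}
I first note that Proposition \ref{etape4} and the Moser--Trudinger inequality give $\int_M e^{\pm p\,u(t)}\,dV \le C_T$ for every $p\ge 1$, and that $\int_M f e^{u(t)}\,dV \in [a\min_M f,\,a\max_M f]$ is bounded away from $0$ and $\infty$; hence the zeroth order term on the right is bounded in $L^2(M)$ (indeed in every $L^p(M)$). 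By elliptic $L^2$ regularity and the $H^1$ bound of Proposition \ref{etape4}, it therefore suffices to bound $\bigl\|e^{u(t)}\partial_t u(t)\bigr\|_{L^2}$ uniformly in $t\in[0,T)$.

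Before estimating the time derivative I would establish an a priori upper bound $u(x,t)\le C_T$ on $M\times[0,T)$ by the maximum principle. At a point $x_t$ where $u(\cdot,t)$ attains its maximum one has $\Delta u(x_t,t)\le 0$, so \eqref{isoE:flot} gives $\partial_t u(x_t,t)\le e^{-u(x_t,t)}S(x_t,t)$ with $S=\rho\bigl(fe^u/\!\int_M fe^u - 1/|M|\bigr)$; using that $\int_M e^{u(t)}\,dV=a$ forces $\max_M u(\cdot,t)\ge \ln(a/|M|)$ (so $e^{-\max u}\le |M|/a$), one checks in both cases $\rho\ge 0$ and $\rho<0$ that $\frac{d}{dt}\max_M u(\cdot,t)\le C$, whence $\max_M u(\cdot,t)\le \|u_0\|_{C^0}+CT=C_T$. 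In particular $e^{u(t)}\le C_T$, which will later let me trade the weight $e^{2u}$ against $e^{u}$.

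To control the time derivative I would bound $\phi(t):=\tfrac12\int_M e^{u(t)}\bigl(\partial_t u(t)\bigr)^2\,dV$ uniformly. From the dissipation identity \eqref{isodecen} and the fact that the $H^1$ bound makes $E_f(u(t))$ bounded below (each term of \eqref{isoE:fonc} being controlled by Proposition \ref{etape4} and by the two--sided bound on $\int_M fe^{u}$), integrating in time yields $\int_0^T \phi(t)\,dt\le C_T$. Differentiating \eqref{isoE:flot} in $t$ and testing with $\partial_t u$ produces, after integration by parts,
\begin{equation*}
\phi'(t) = -\int_M |\nabla \partial_t u|^2\,dV - \frac12\int_M e^{u}\bigl(\partial_t u\bigr)^3\,dV + \int_M \partial_t u\,\partial_t\!\left(\frac{\rho f e^{u}}{\int_M f e^{u}\,dV}\right)dV,
\end{equation*}
where the last term is bounded by $C\phi(t)$ (Cauchy--Schwarz in the measure $fe^{u}\,dV$ together with the bounds on $\int_M fe^{u}$). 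The dissipative term $-\|\nabla\partial_t u\|_{L^2}^2$ is kept to absorb part of the cubic term.

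The main obstacle is the cubic term $\int_M e^{u}\bigl(\partial_t u\bigr)^3\,dV$: it couples $\partial_t u$ with $\nabla u$ at a level comparable to the quantity being estimated. I would control it by H\"older's inequality (absorbing the weights $e^{\pm u}$ through the moment bounds above and the upper bound on $u$), the two--dimensional Gagliardo--Nirenberg inequality applied to $e^{u/2}\partial_t u$, and Young's inequality, arranging to bound it by $\tfrac12\|\nabla\partial_t u\|_{L^2}^2 + C\bigl(\phi(t)^2+\phi(t)\bigr)$; this weight juggling, and in particular the handling of the $\nabla u$ factor, is the delicate point, resolved by the $L^\infty$ upper bound on $u$ and the Moser--Trudinger moment estimates inherited from Proposition \ref{etape4}. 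Absorbing the gradient term then leaves a differential inequality $\phi'(t)\le C\phi(t)^2 + C=\bigl(C\phi(t)\bigr)\phi(t)+C$. Since $\int_0^T\phi\le C_T$, the uniform Gronwall lemma gives $\phi(t)\le C_T$ on $[\delta,T)$ for a fixed $\delta>0$, the interval $[0,\delta]$ being covered by the local regularity $u\in C^{2+\alpha,1+\frac\alpha2}$. Finally, using $e^{2u}\le C_T\,e^{u}$ from the upper bound, one gets $\int_M e^{2u}(\partial_t u)^2\,dV\le 2C_T\,\phi(t)\le C_T$, hence $\|\Delta u(t)\|_{L^2}\le C_T$ by the displayed identity; elliptic regularity and Proposition \ref{etape4} then yield $\|u(t)\|_{H^2}\le C_T$.
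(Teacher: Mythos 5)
Your preparatory steps are sound: the reduction to bounding $\left\|e^{u(t)}\partial_t u(t)\right\|_{L^2(M)}$, the maximum-principle upper bound $u\leq C_T$ (which is correct, and not in the paper), the identity for $\phi'(t)$, and the bound $C\phi(t)$ for the term coming from $\partial_t\left(\rho f e^u/\int_M fe^u dV\right)$ all check out. The gap is the one step you flag as ``delicate'' and then assert: the estimate of the cubic term by $\tfrac12\|\nabla \partial_t u\|_{L^2}^2+C\left(\phi^2+\phi\right)$. The tools you invoke do not give it, and the obstruction is exactly the absence of a \emph{lower} bound on $e^{u}$: for $\rho>0$ the minimum principle only yields $e^{\min u(t)}\geq e^{\min u_0}-\rho t/|M|$, which degenerates in finite time, so nothing prevents $e^{u}$ from being arbitrarily small on small sets, and then $\phi$ does not control any unweighted norm of $\partial_t u$. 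Concretely, following your route with $v=e^{u/2}\partial_t u$: H\"older and the Moser--Trudinger moments give $\int_M e^{u}|\partial_t u|^3 dV\leq C_T\|v\|_{L^4}^3\leq C_T\phi^{3/4}\|v\|_{H^1}^{3/2}$, but $\nabla v$ contains the term $\tfrac12 v\nabla u$, so estimating $\|v\|_{H^1}$ forces you through $\int_M v^2|\nabla u|^2dV\leq C\|v\|_{L^2}\|v\|_{H^1}\|\nabla u\|_{L^2}\|u\|_{H^2}$ and hence through $\|u\|_{H^2}\leq C_T(1+\phi^{1/2})$; after Young this gives $\|v\|_{H^1}^2\leq C_T\left(\|\nabla\partial_t u\|_{L^2}^2+\phi^2+\phi\right)$ and therefore $\phi'\leq C_T\left(\phi^3+\phi\right)$ --- a \emph{cubic}, not quadratic, inequality (refinements of the H\"older exponents give at best $\phi^{2+\delta}$, $\delta>0$, the endpoint $\delta=0$ requiring precisely $\sup e^{-u}<\infty$). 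A cubic inequality does not close: with only $\int_0^T\phi\,dt\leq C_T$ available, the comparison ODE $\Phi'=C\Phi^3$ starting from $\Phi(s)\leq C_T/\delta$ blows up in time $\sim\delta^2/(2CC_T^2)\ll\delta$, so the uniform Gronwall argument fails, and no smallness of $\phi$ is available at $t=0$ to run a continuity argument instead.

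It is worth noting that the inequality you want, $y'\leq C y^2+Cy$ for $y=2\phi$, is exactly what the paper proves in Section 4 --- but only after securing a two-sided bound $|u(t)|\leq C_1$, itself bootstrapped from smallness of $y$ on the interval considered (available there because $t_0$ is large, not here). The paper's proof of this proposition takes a different and safer route: it differentiates $\int_M(\Delta u(t))^2dV$ rather than $\phi(t)$. Then the dissipation appears as $-\int_M|\nabla v(t)|^2dV$ in terms of $v$ itself, so every occurrence of $\|v\|_{H^1}$ in the error terms can be absorbed by Young's inequality, and what remains is a Gronwall inequality of the form $\frac{d}{dt}\left(\int_M(\Delta u)^2dV+1\right)\leq C_T\left(\int_M(\Delta u)^2dV+1\right)h(t)$ with $h$ controlled by $\|v(t)\|_{L^2}^2+1$, whose time integral is finite by the energy identity \eqref{iso51bis}; crucially, no lower bound on $e^u$ is ever needed. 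To repair your proof you should either switch to differentiating $\|\Delta u(t)\|_{L^2}^2$ as the paper does (your maximum-principle bound then becomes superfluous), or supply a genuinely new argument for the cubic term valid without a lower bound on the conformal factor; as written, the argument is complete only in the case $\rho\leq 0$, where the minimum principle does give $u\geq -C_T$ and hence the two-sided bound your Gagliardo--Nirenberg step needs.
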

\begin{proof}
In view of Proposition \ref{etape4}, we just need to bound $\displaystyle\int_M \left(\Delta u(t)\right)^2dV$, for all $t\in [0,T)$. We begin by setting 
$$v(t)=\dfrac{\partial u(t)}{\partial t}e^{\frac{u(t)}{2}},$$
then equation (\ref{isoE:flot}) becomes
$$v(t) e^{\frac{u(t)}{2}}=\Delta u(t) -\dfrac{\rho}{|M|} + \frac{\rho f e^{u(t)}}{\int_{M}f e^{u(t)}dV}.$$
Differentiating $\displaystyle\int_M \left(\Delta u(t)\right)^2dV$ with respect to $t$ and integrating by parts on $M$, we have
\begin{eqnarray*}
&& \frac{1}{2}\dfrac{\partial }{\partial t}\int_{M}\left(\Delta u(t)\right)^{2}dV  \\
& = &\int_{M}\left(v(t)e^{\frac{u(t)}{2}}+\dfrac{\rho}{|M|} - \frac{\rho f e^{u(t)}}{\int_{M}f e^{u(t)}dV}\right) \Delta \left(v(t)e^{-\frac{u(t)}{2}}\right)dV \\
& = &-\int_{M}\left|\nabla v(t)\right|^{2}dV+\frac{1}{4}\int_{M}v^{2}(t)\left|\nabla u(t)\right|^{2}dV \\
& +&\frac{\rho}{\int_{M}fe^{u(t)}dV}\left(\int_{M}\nabla f \nabla v(t) e^{\frac{u(t)}{2}}dV-\frac{1}{2}\int_{M}\nabla f v(t)\nabla u(t) e^{\frac{u(t)}{2}}dV\right.\\
&+&\left.\int_{M}f\nabla u(t) \nabla v(t) e^{\frac{u(t)}{2}}dV- \frac{1}{2}\int_{M} f \left|\nabla u(t)\right|^{2}v(t) e^{\frac{u(t)}{2}}dV\right).
\end{eqnarray*}

\noindent Since $f\in C^\infty (M)$ and is strictly positive (in particular we have $\displaystyle \int_{M}fe^{u(t)}dV\geq C \underset{x\in M}{\min} f(x)),$ we obtain

\begin{eqnarray}
\label{isoee48}
&&\frac{1}{2}\dfrac{\partial }{\partial t}\int_{M}\left(\Delta u(t)\right)^{2}dV\\ & \leq &-\int_{M}\left|\nabla v(t)\right|^{2}dV+C \int_{M}v^{2}(t)\left|\nabla u(t)\right|^{2}dV \nonumber \\
&+& C \left(\int_{M}e^{\frac{u(t)}{2}}\left(\left|\nabla v(t)\right|+ \left|v(t)\right|\left|\nabla u(t)\right|+\left|\nabla u(t)\right|\left| \nabla v(t)\right|+\left|\nabla u(t)\right|^{2}\left|v(t)\right|\right)dV\right). \nonumber
\end{eqnarray}
Let's estimate the positive terms on the right side of (\ref{isoee48}). From H\"{o}lder's inequality, we have, recalling that $\displaystyle\int_{M}e^{u(t)}dV=\int_M e^{u_0}dV$, for all $t\in [0,T)$,
\begin{equation}
\label{isoee48.2}
\int_{M}\left| \nabla v(t)\right|e^{\frac{u(t)}{2}}dV\leq C \left\|v(t)\right\|^{\frac{1}{2}}_{H^{1}(M)}.
\end{equation}
Using Gagliardo-Nirenberg's inequality (see for example \cite{MR1961176})
$$\left\|h\right\|^2_{L^4(M)}\leq C\left\|h\right\|_{L^2(M)}\left\|h\right\|_{H^1(M)},\ \forall h\in H^1(M),$$
and Cauchy-Schwarz's inequality, we have
\begin{eqnarray*}
\int_{M} v^2(t) \left|\nabla u(t)\right|^{2}dV & \leq &\left(\int_{M}v^{4}(t)dV\right)^{\frac{1}{2}} \left(\int_{M}\left|\nabla u(t)\right|^{4}dV\right)^{\frac{1}{2}} \nonumber\\
& = &  \left\|v(t)\right\|^{2}_{L^{4}(M)} \left\|\nabla u(t)\right\|^{2}_{L^{4}(M)} \nonumber\\
 & \leq & C\left\|v(t)\right\|_{L^{2}(M)} \left\|v(t)\right\|_{H^{1}(M)} \left\|\nabla u(t)\right\|_{L^{2}(M)}\left\|\nabla u(t)\right\|_{H^{1}(M)}. 
\end{eqnarray*}
Since, by Proposition \ref{etape4}, $\left\|u(t)\right\|_{H^1(M)}\leq C_T$ for all $t\in [0,T)$, we get
\begin{equation}
\label{isoee48.1}
\int_{M} v^2 (t)\left|\nabla u(t)\right|^{2}dV  \leq C_T \left\|v(t)\right\|_{L^{2}(M)} \left\|v(t)\right\|_{H^{1}(M)} \left\|u(t)\right\|_{H^2(M)}
\end{equation}
Using Proposition \ref{etape4} and Moser-Trudinger inequality \eqref{equationmanquante2isocha}, we deduce that there exists a constant $C_T$ such that, for all $ t\in [0,T)$,  and $p\in \R$,
\begin{equation}
\label{ibep}
\int_M e^{pv(t)}dV\leq C_T.
\end{equation}
In the same way as to prove \eqref{isoee48.1} and using \eqref{ibep}, we obtain
\begin{eqnarray}
\label{isoee48.3}
&&\int_{M} \left|\nabla u(t)\right| \left| v(t)\right|e^{\frac{u(t)}{2}}dV \nonumber \\
& \leq &\left(\int_{M}\left|\nabla u(t)\right|^{2}dV\right)^{\frac{1}{2}} \left(\int_{M}v^{4}(t)dV\right)^{\frac{1}{4}}  \left(\int_{M}e^{2u(t)}dV\right)^{\frac{1}{4}}\nonumber\\
& \leq &C_T \left\|v(t)\right\|_{L^{4}(M)} \nonumber\\
& \leq &C_T \left\|v(t)\right\|_{L^{2}(M)}^{\frac{1}{2}} \left\|v(t)\right\|_{H^{1}(M)}^{\frac{1}{2}},
\end{eqnarray}

\begin{eqnarray}
\label{isoee48.4}
&&\int_{M} \left|\nabla u(t)\right| \left| \nabla v(t)\right|e^{\frac{u(t)}{2}}dV \nonumber \\
& \leq &\left(\int_{M}\left|\nabla v(t)\right|^{2}dV\right)^{\frac{1}{2}} \left(\int_{M}\left|\nabla u(t)\right|^{4}dV\right)^{\frac{1}{4}}  \left(\int_{M}e^{2u(t)}dV\right)^{\frac{1}{4}}\nonumber\\
& \leq &C_T \left\|v(t)\right\|_{H^{1}(M)}\left\|u(t)\right\|_{H^{2}(M)}^{\frac{1}{2}},
\end{eqnarray}

and

\begin{eqnarray}
\label{isoee48.5}
&&\int_{M} \left|\nabla u(t)\right|^{2} \left| v(t)\right|e^{\frac{u(t)}{2}}dV \nonumber \\
& \leq &C_T \left\|v(t)\right\|_{L^{4}(M)}\left\|\nabla u(t)\right\|_{L^{4}(M)}^2\nonumber\\
& \leq &C_T \left\|u(t)\right\|_{H^{2}(M)}\left\|v(t)\right\|_{L^{2}(M)}^{\frac{1}{2}}\left\|v(t)\right\|_{H^{1}(M)}^{\frac{1}{2}}.
\end{eqnarray}
Finally, inserting estimates \eqref{isoee48.2}, \eqref{isoee48.1}, \eqref{isoee48.3}, \eqref{isoee48.4}, \eqref{isoee48.5} into \eqref{isoee48}, it follows that
\begin{eqnarray*}
\frac{1}{2}\dfrac{\partial }{\partial t}\int_{M}\left(\Delta u(t)\right)^{2}dV & \leq &- \int_M |\nabla v(t)|^2dV  + C_T \left\|v(t)\right\|_{L^{2}(M)}\left\|v(t)\right\|_{H^{1}(M)}\left\|u (t)\right\|_{H^{2}(M)} \\
&+& C_T \left(\left\|v(t)\right\|_{H^{1}(M)}^{\frac{1}{2}}+ \left\|v(t)\right\|_{H^{1}(M)}^{\frac{1}{2}}\left\|v(t)\right\|_{L^{2}(M)}^{\frac{1}{2}}\right)\\
&+&C_T\left\|v(t)\right\|_{H^{1}(M)}\left\|u(t)\right\|_{H^{2}(M)}^{\frac{1}{2}}\\
&+&C_T \left\|u(t)\right\|_{H^{2}(M)}\left\|v(t)\right\|_{L^{2}(M)}^{\frac{1}{2}}\left\|v(t)\right\|_{H^{1}(M)}^{\frac{1}{2}}.
\end{eqnarray*}
Using Young's inequality on each positive terms, we obtain
\begin{eqnarray}
\label{6juin2012e20}
&&\dfrac{\partial }{\partial t}\left(\int_{M}\left(\Delta u(t)\right)^{2}dV+ 1\right)  \nonumber \\
&\leq & C_T \left(\int_{M}\left(\Delta u(t)\right)^{2}dV+ 1\right)\left(\left\|v(t)\right\|_{L^{2}(M)}+1 \right).
\end{eqnarray}
\noindent On the other hand, for all $t\in [0,T)$, one has, since $\left\|u(t)\right\|_{H^1(M)}\leq C_T$,
\begin{eqnarray}
\label{iso51bis}
\int_0^t\left\|v(s)\right\|_{L^{2}(M)}^{2}ds&=&\int_0^t\int_{M}v^{2}(s)dVds\nonumber\\
&=&\int_0^t\int_{M}\left(\dfrac{\partial u(s)}{\partial t}\right)^{2}e^{u(s)}dVds\nonumber \\
&=&-\int_0^t\dfrac{\partial}{\partial t} E_f(u(s))ds=E_f(u_0)-E_f(u(t))\nonumber \\
 &\leq &C_T.
\end{eqnarray}
Thus, integrating \eqref{6juin2012e20} with respect to $t$ and using (\ref{iso51bis}), it follows that 
$$\int_{M}\left(\Delta u(t)\right)^{2}dV\leq C_T,\ \forall t\in [0,T).$$
Therefore we conclude that
$$\left\|u(t)\right\|_{H^{2}(M)}\leq C_T, \forall  t\in \left[0,T\right).$$
\end{proof}
\vspace{12pt}
\vspace{12pt}
\noindent\textit{Proof of Theorem \ref{isoexglob}.} We recall that to prove Theorem \ref{isoexglob}, it is sufficient to prove \eqref{28juin2012e3}, i.e. there exists a constant $C_T>0$ such that
\begin{equation*}
\left\|u\right\|_{C^{2+\alpha, 1+\frac{\alpha}{2}}(M\times [0,T))}\leq C_T.
\end{equation*}
First, we claim that for all $\alpha\in (0,1)$, there exists a constant $C_T$ such that
\begin{equation}
\label{6juin2012e11}
|u(x_1,t_1)-u(x_2,t_2)|\leq C_T(|t_1-t_2|^{\frac{\alpha}{2}}+|x_1-x_2|^{\alpha}),
\end{equation}
for all $x_1,x_2 \in M$ and all $t_1,t_2 \in [0,T)$, where $|x_1-x_2|$ stands for the geodesic distance from $x_1$ to $x_2$ with respect to the metric $g$. From Proposition \ref{etape5} and Sobolev's embedding Theorem, we get, for $\alpha \in (0,1)$, $t\in [0,T)$ that there exists a constant $C_T$ such that $\left\|u(t)\right\|_{C^\alpha (M)}\leq C_T$, i.e. for all $x,y\in M$,
\begin{equation}
\label{bren4}
|u(x,t)-u(y,t)|\leq C_T |x-y|^\alpha .
\end{equation}
If $t_2-t_1 \geq 1$, using \eqref{bren4}, it is easy to see that \eqref{6juin2012e11} holds. Therefore, from now on, we assume that $0<t_2-t_1<1$. On the other hand, since $u(t)$ is a solution of \eqref{isoE:flot} and $\left\|u(t)\right\|_{C^\alpha (M)}\leq C_T$, one has, $\forall t\in [0,T)$,
$$\left|\dfrac{\partial u(t)}{\partial t}\right|^2\leq C_T |\Delta u(t)|^2 +C_T.$$
Integrating the previous estimate on $M$, we obtain, for all $t \in [0,T)$,
\begin{equation}
\label{bren1}
\int_M \left|\dfrac{\partial u(t)}{\partial t}\right|^2dV \leq C_T \left\|u(t)\right\|^2_{H^2(M)}+C_T\leq C_T.
\end{equation}
Now, we write 
\begin{eqnarray}
\label{bren3}
|u(x,t_1)&-&u(x,t_2)|= \frac{1}{|B_{\sqrt{t_2-t_1}}(x)|}\int_{B_{\sqrt{t_2-t_1}}(x)} |u(x,t_1)-u(x,t_2)|dV(y)\nonumber \\
&\leq & \frac{C}{ t_2-t_1} \int_{B_{\sqrt{t_2-t_1}}(x)} |u(x,t_1)-u(y,t_1)|dV(y)\nonumber\\
& +&\frac{C}{ t_2-t_1}\int_{B_{\sqrt{t_2-t_1}}(x)} |u(y,t_1)-u(y,t_2)|dV(y) \nonumber\\
&+&\frac{C}{ t_2-t_1}\int_{B_{\sqrt{t_2-t_1}}(x)} |u(y,t_2)-u(x,t_2)|dV(y),
\end{eqnarray}
where $B_{\sqrt{t_2-t_1}}(x)$ stands for the geodesic ball of radius $\sqrt{t_2-t_1}$ centered in $x$. \noindent Let's consider the first term on the right of (\ref{bren3}). Using (\ref{bren4}), we obtain
\begin{eqnarray}
\label{bren5}
&&\frac{C}{t_2-t_1} \int_{B_{\sqrt{t_2-t_1}}(x)} |u(x,t_1)-u(y,t_1)|dV(y)\nonumber \\ &\leq & \frac{C_T}{ (t_2-t_1)} \int_{B_{\sqrt{t_2-t_1}}(x)} |x-y|^{\alpha}dV(y)\nonumber\\
&\leq & C_T(t_2-t_1)^{\frac{\alpha}{2}}.
\end{eqnarray}
In the same way, we have
\begin{equation}
\label{bren6}
\frac{C}{ t_2-t_1} \int_{B_{\sqrt{t_2-t_1}}(x)} |u(x,t_2)-u(y,t_2)|dV(y) \leq C_T (t_2-t_1)^{\frac{\alpha}{2}}.
\end{equation}
We have, using H\"{o}lder's inequality and (\ref{bren1}),
\begin{eqnarray}
\label{bren2}
&&\frac{C}{t_2-t_1}\int_{B_{\sqrt{t_2-t_1}}(x)} |u(y,t_1)-u(y,t_2)|dV(y)\nonumber \\ &\leq & C \sup_{t_1\leq\tau \leq t_2 }\int_{B_{\sqrt{t_2-t_1}}(x)} \left|\dfrac{\partial u}{\partial s}  \right| (y,\tau)dV(y)\nonumber \\
& \leq & C \sqrt{t_2-t_1} \sup_{t_1\leq\tau \leq t_2 }\left(\int_{B_{\sqrt{t_2-t_1}}(x)} \left|\dfrac{\partial u}{\partial s}  \right|^2 (y,\tau)dV(y)\right)^{\frac{1}{2}}\nonumber \\
& \leq & C_T \sqrt{t_2-t_1}.
\end{eqnarray}
Putting (\ref{bren5}), (\ref{bren6}), (\ref{bren2}) in (\ref{bren3}) and noticing that for all $0<t_2-t_1<1$, we have $\sqrt{t_2-t_1}\leq (t_2-t_1)^{\frac{\alpha}{2}}$, we find
\begin{equation}
\label{bren7}
|u(x,t_1)-u(x,t_2)|\leq  C_T (t_2-t_1)^{\frac{\alpha}{2}} .
\end{equation}
The inegalities (\ref{bren4}) and (\ref{bren7}) imply that 
\begin{equation*}
|u(x_1,t_1)-u(x_2,t_2)|\leq C_T(|t_1-t_2|^{\frac{\alpha}{2}}+|x_1-x_2|^{\alpha}),
\end{equation*}
for all $x_1,x_2 \in M$ and all $t_1,t_2 \in [0,T)$, $0<t_2-t_1 <1$. This establishes \eqref{6juin2012e11}.
In view of \eqref{6juin2012e11}, we may apply the standard regularity theory for parabolic equations (see for example \cite{MR0181836}) to derive that there exists a constant $C_T$ depending on $T$ such that
$$\left\|u\right\|_{C^{2+\alpha,1+\frac{\alpha}{2}}(M\times [0,T))}\leq C_T,\ \alpha\in (0,1).$$
This completes the proof of Theorem \ref{isoexglob}. 
 
\section{Improved Moser-Trudinger's inequality.}
We begin by recalling the Moser-Trudinger's inequality (see \cite{MR0301504}, \cite{MR0216286}): there exists a constant $C$ depending on $(M,g)$ such that, for all $u\in H^1 (M)$,
\begin{equation}
\label{realmosertrud}
\int_M e^{\frac{4\pi (u -\bar{u})^2}{\int_M |\nabla u|^2 dV}}dV\leq C,
\end{equation}
where $\bar{u}$ stands for the mean value of $u$ on $M$ i.e. $\bar{u}=\dfrac{\int_M u dV}{|M|}$. As a consequence of \eqref{realmosertrud}, we obtain the following inequality : there exists a constant $C$ depending on $(M,g)$ such that, for all $u\in H^1 (M)$, 
\begin{equation}
\label{equationmanquante2isocha}
\log \left(\int_{M}e^{u-\bar{u}}dV\right)\leq \frac{1}{16\pi}\int_{M}\left|\nabla u\right|^{2}dV+C.
\end{equation}

\noindent The next lemma shows that the Moser-Trudinger's inequality \eqref{equationmanquante2isocha} can be improved for functions which "concentrate" in some points.
\begin{lem}[\cite{MR1129348}]
\label{iil1.1}
Let $\delta_{0}$, $\gamma_{0}$ be some positive real numbers, $l$ an integer, $\Omega_{1},\ldots,\Omega_{l}$ subsets of $M$ such that $dist\left(\Omega_{i},\Omega_{j}\right)\geq \delta_{0}$ for $i\neq j$. Then, for all $\tilde{\varepsilon}>0$, there exists a constant $C$ depending on $l,\tilde{\varepsilon},\delta_{0}$ and $\gamma_{0}$ such that
$$\log \left(\int_{M}e^{u-\bar{u}}dV\right)\leq \left(\frac{1}{16l\pi}+\tilde{\varepsilon}\right) \int_{M}\left|\nabla u\right|^{2}dV+C,$$
for $u \in H^{1}\left(M\right)$ such that, for all $i\in \left\{1,\ldots,l\right\}$,
$$\int_{\Omega_{i}}e^{u}dV\geq \gamma_{0}\int_{M}e^{u}dV.$$
\end{lem}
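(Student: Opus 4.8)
The plan is to localize the mass and then average $l$ copies of the standard inequality \eqref{equationmanquante2isocha}, the factor $\frac{1}{l}$ arising because the Dirichlet energy is a single quantity that must be split over the $l$ disjoint regions. First I would reduce to the case $\bar{u}=0$: replacing $u$ by $u-\bar{u}$ changes neither $\int_M|\nabla u|^2\,dV$ nor the hypothesis (both $\int_{\Omega_i}e^u\,dV$ and $\int_M e^u\,dV$ get multiplied by $e^{-\bar{u}}$), and the conclusion is unchanged as well. So it suffices to prove $\log\int_M e^u\,dV\le\big(\frac{1}{16l\pi}+\tilde{\varepsilon}\big)\int_M|\nabla u|^2\,dV+C$ when $\int_M u\,dV=0$.

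Since $\mathrm{dist}(\Omega_i,\Omega_j)\ge\delta_0$ for $i\neq j$, the neighborhoods $N_i=\{x:\mathrm{dist}(x,\Omega_i)<\delta_0/2\}$ are pairwise disjoint. For each $i$ I would pick a cut-off $g_i$ with $0\le g_i\le1$, $g_i\equiv1$ on $\Omega_i$ and $\mathrm{supp}\,g_i\subset N_i$, and set $w_i=g_i(u-c_i)+c_i$, where $c_i$ is the mean of $u$ over the shell on which $g_i$ transitions. Then $w_i=u$ on $\Omega_i$, so $\int_{\Omega_i}e^u\,dV\le\int_M e^{w_i}\,dV$, and combining the concentration hypothesis with \eqref{equationmanquante2isocha} applied to $w_i$ gives, for every $i\in\{1,\dots,l\}$,
\begin{equation*}
\log\int_M e^u\,dV\le\log\frac{1}{\gamma_0}+\log\int_{\Omega_i}e^u\,dV\le\log\frac{1}{\gamma_0}+\bar{w}_i+\frac{1}{16\pi}\int_M|\nabla w_i|^2\,dV+C.
\end{equation*}
The key point is that this holds for all $i$ at once; averaging the $l$ inequalities and using that the sets $\{g_i=1\}$ are disjoint, so that $\sum_i\int_{\{g_i=1\}}|\nabla u|^2\,dV\le\int_M|\nabla u|^2\,dV$, produces the coefficient $\frac{1}{16l\pi}$ in front of $\int_M|\nabla u|^2\,dV$, up to the gradient energy lost in the transition shells and the average $\frac1l\sum_i\bar{w}_i$.

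The main obstacle is precisely this transition loss: cutting $u$ down to a constant across a shell adds energy $\int(u-c_i)^2|\nabla g_i|^2\,dV$, and a crude cut-off only yields $\int_M|\nabla w_i|^2\,dV\le K\int_{N_i}|\nabla u|^2\,dV$ with a fixed $K>1$, which would ruin the sharp constant. To make the loss negligible I would use a telescoping family of cut-offs: subdivide $N_i\setminus\Omega_i$ into $N$ nested shells of width $\sim\delta_0/N$; since their energies add up to at most $\int_{N_i}|\nabla u|^2\,dV$, at least one shell carries energy $\le\frac1N\int_{N_i}|\nabla u|^2\,dV$, and I would let $g_i$ transition only across that thin, low-energy shell. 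Balancing the thinness (which makes $|\nabla g_i|$ large) against a Poincar\'e-type estimate across the shell (integrating along the radial geodesics normal to $\Omega_i$) shows that the added energy is controlled by a small multiple of the shell's energy, hence by $\frac{C}{N}\int_{N_i}|\nabla u|^2\,dV$; summing over $i$, the total loss is $\le\frac{C}{N}\int_M|\nabla u|^2\,dV$, which is $\le 16\pi l\,\tilde{\varepsilon}\int_M|\nabla u|^2\,dV$ once $N=N(\tilde{\varepsilon},l,\delta_0)$ is large. This quantitative control of the cut-off loss is the delicate heart of the argument.

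Finally I would absorb $\frac1l\sum_i\bar{w}_i$. Because $\bar{u}=0$, Poincar\'e's inequality gives $\|u\|_{L^2(M)}\le C\|\nabla u\|_{L^2(M)}$, so each $|c_i|$ and each $|\bar{w}_i|$ is bounded by $C\big(\int_M|\nabla u|^2\,dV\big)^{1/2}$; by Young's inequality this is $\le\tilde{\varepsilon}\int_M|\nabla u|^2\,dV+C_{\tilde{\varepsilon}}$. Collecting the estimates yields $\log\int_M e^u\,dV\le\big(\frac{1}{16l\pi}+c\,\tilde{\varepsilon}\big)\int_M|\nabla u|^2\,dV+C$ with $C=C(l,\tilde{\varepsilon},\delta_0,\gamma_0)$, and relabelling $\tilde{\varepsilon}$ gives the claim. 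Every step other than the thin-shell estimate is routine bookkeeping with H\"older's, Poincar\'e's and Young's inequalities.
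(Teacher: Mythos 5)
The paper itself offers no proof of this lemma --- it is quoted from Chen--Li \cite{MR1129348} --- so your attempt has to be judged against the known proofs of that result. Your skeleton is the right one (normalize $\bar{u}=0$, cut off $u$ near each $\Omega_i$, apply \eqref{equationmanquante2isocha} to each cut-off function, use disjointness of the neighborhoods $N_i$ to gain the factor $\frac{1}{l}$, absorb the terms that are linear in $\|\nabla u\|_{L^2}$ by Poincar\'e and Young), and you correctly identify the cut-off energy loss $\int_{A}(u-c_i)^2|\nabla g_i|^2\,dV$ as the heart of the matter. But your mechanism for controlling it --- pigeonholing a low-energy thin shell and using a radial Poincar\'e inequality --- does not work. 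The obstruction is tangential oscillation: the Neumann--Poincar\'e constant of a shell $A$ of width $h$ around a curve of unit length is of order $1$ (it is set by the tangential extent), not of order $h^2$. Concretely, if $\theta$ is a tangential coordinate along the shell and $u=K\sin\theta$ there, then $\int_A|\nabla u|^2\,dV\sim K^2h$ while $\inf_{c}\int_A(u-c)^2\,dV\sim K^2h$, so with $|\nabla g_i|\sim h^{-1}$ the added energy is $\sim K^2/h$, i.e.\ $h^{-2}$ times the shell's own energy. Integrating along radial geodesics only compares $u$ with its trace on the inner boundary of the shell, and that trace is a \emph{function}, not a constant; since $w_i$ must be constant outside $N_i$, you are forced to compare $u$ with a single constant $c_i$, which is exactly what oscillation along the shell makes expensive. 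The pigeonhole over $N$ shells gains a factor $\frac{1}{N}\sim h$, which cannot beat the $h^{-2}$ blow-up: taking $u=K\sin\theta$ on all of $N_i\setminus\Omega_i$, every shell carries a fair share of the energy, yet the loss across any chosen shell is of order $N$ times the total energy of $N_i$, so your claimed bound $\frac{C}{N}\int_{N_i}|\nabla u|^2\,dV$ is false and the estimate degenerates as the shells get thinner.

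The standard way to close this gap (and essentially the proof behind the cited lemma) is spectral, not geometric. With $\bar{u}=0$, fix $\Lambda>0$ and split $u=u_1+u_2$, where $u_1$ is the $L^2$-projection of $u$ onto the span of the eigenfunctions of $\Delta$ with eigenvalue at most $\Lambda$. Then $\|u_1\|_{L^\infty}\leq C_\Lambda\|u\|_{L^2}\leq C_\Lambda'\|\nabla u\|_{L^2}$, so $\int_{\Omega_i}e^{u}\leq e^{\|u_1\|_{L^\infty}}\int_{\Omega_i}e^{u_2}$ and the factor $\|u_1\|_{L^\infty}$, being linear in $\|\nabla u\|_{L^2}$, is absorbed by Young's inequality into $\tilde{\varepsilon}\int_M|\nabla u|^2\,dV+C$. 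Now run your cut-off argument on $u_2$ with \emph{fixed-width} cut-offs $g_i$ supported in the disjoint $N_i$, $|\nabla g_i|\leq C/\delta_0$: the interference term is bounded by $C_{\delta_0}\|u_2\|_{L^2}^2$, and the whole point of the splitting is the spectral gap $\|u_2\|_{L^2}^2\leq\frac{1}{\Lambda}\|\nabla u_2\|_{L^2}^2\leq\frac{1}{\Lambda}\|\nabla u\|_{L^2}^2$, so this term becomes $\leq\tilde{\varepsilon}\int_M|\nabla u|^2\,dV$ once $\Lambda=\Lambda(\tilde{\varepsilon},\delta_0)$ is chosen large. Averaging over $i$ (your averaging step is fine, and is equivalent to picking the index with least local energy) and using $\sum_i\int_{N_i}|\nabla u_2|^2\,dV\leq\int_M|\nabla u_2|^2\,dV\leq\int_M|\nabla u|^2\,dV$ then yields the coefficient $\frac{1}{16l\pi}+c\tilde{\varepsilon}$, and the rest of your bookkeeping goes through unchanged. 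In short: replace the thin-shell pigeonhole by the low/high frequency decomposition; without it, the proof has a genuine gap at its central step.
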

We define $H_{G}^{1}(M)$ the space of functions $u\in H^1(M)$ such that $u$ is $G$-invariant. By considering functions $u\in H_{G}^{1}(M)$ and assuming hypothesis on the cardinal of orbits of points of $M$ under the action of $G$, we find that there exist subsets $\left\{\Omega_{i}\right\}_{1\leq i\leq l}$ of $M$ such as described previously. More precisely, we have

\begin{prop}
\label{iil1.2}
Let $k\in \N$, $k\geq 2$. Suppose that $\displaystyle \min_{x\in M}\left|O_{G}\left(x\right)\right|\geq k$. Then, for all $\varepsilon>0$, there exists a constant $C$ positive depending on $M$ and $\varepsilon$ such that, for all $ u\in H_{G}^{1}(M)$,
\begin{equation}
\label{improvedmoser}
\log \left(\int_{M}e^{u-\bar{u}}dV\right)\leq  \left(\frac{1}{16k\pi}+\varepsilon\right) \int_{M}\left|\nabla u\right|^{2}dV+C.
\end{equation}
\end{prop}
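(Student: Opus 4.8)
The plan is to deduce \eqref{improvedmoser} from the concentration Lemma \ref{iil1.1} applied with $l=k$. The key observation is that the constant in Lemma \ref{iil1.1} depends only on $l$, $\tilde\varepsilon$, $\delta_0$ and $\gamma_0$, and \emph{not} on the particular sets $\Omega_i$; we are therefore free to let these sets depend on $u$, as long as their number $k$, their mutual separation $\delta_0$ and the mass fraction $\gamma_0$ they capture can be chosen uniformly in $u\in H^1_G(M)$. All the work is to produce, for each $u$, such a configuration out of the $G$-invariance of $e^u\,dV$.

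First I would establish a purely geometric fact: there is a constant $\rho_0>0$, depending only on $(M,g)$, $G$ and $k$, such that for every $x\in M$ the orbit $O_G(x)$ contains $k$ points that are pairwise at distance at least $\rho_0$. To see this, define
$$s(x)=\sup\Big\{s\ge 0:\ \exists\, p_1,\dots,p_k\in O_G(x),\ d(p_i,p_j)\ge s\ \text{ for } i\ne j\Big\}.$$
Since $|O_G(x)|\ge k$, any $k$ distinct orbit points have positive minimal mutual distance, so $s(x)>0$ for all $x$. Moreover $s$ is lower semicontinuous: if $x_n\to x$ and $p_i=\sigma_i(x)$ ($\sigma_i\in G$) realize a separation $s<s(x)$, then $\sigma_i(x_n)\in O_G(x_n)$ and, because each isometry $\sigma_i$ and the distance function are continuous, $d(\sigma_i(x_n),\sigma_j(x_n))\to d(p_i,p_j)\ge s$, whence $\liminf_n s(x_n)\ge s(x)$. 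A positive lower semicontinuous function on the compact manifold $M$ attains a positive minimum $m_0$, and I would set $\rho_0=m_0/2$, so that every orbit carries $k$ points pairwise at distance $\ge\rho_0$. This step, and in particular the uniformity of $\rho_0$ in $x$, is where the hypothesis $\min_x|O_G(x)|\ge k$ is really used, and it is the main obstacle in the proof.

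Next, fix $u\in H^1_G(M)$ and consider the probability measure $d\mu=e^u\,dV\big/\int_M e^u\,dV$. Since every $\sigma\in G$ is a volume-preserving isometry and $u\circ\sigma=u$, the measure $\mu$ is $G$-invariant, i.e. $\mu(\sigma(A))=\mu(A)$ for every Borel set $A$ and every $\sigma\in G$. Cover $M$ by $N=N(\rho_0,M)$ geodesic balls of radius $\rho_0/4$; as $\mu(M)=1$, at least one of them, say $B(x_0,\rho_0/4)$, satisfies $\mu(B(x_0,\rho_0/4))\ge 1/N$. Applying the geometric step to $x_0$, choose $\sigma_1,\dots,\sigma_k\in G$ with $p_i=\sigma_i(x_0)$ pairwise at distance $\ge\rho_0$, and put
$$\Omega_i=B(p_i,\rho_0/4)=\sigma_i\big(B(x_0,\rho_0/4)\big),\qquad i=1,\dots,k.$$
Then $\mathrm{dist}(\Omega_i,\Omega_j)\ge\rho_0-2\cdot\frac{\rho_0}{4}=\frac{\rho_0}{2}=:\delta_0>0$ for $i\ne j$, while the $G$-invariance of $\mu$ yields $\mu(\Omega_i)=\mu(B(x_0,\rho_0/4))\ge 1/N$, that is $\int_{\Omega_i}e^u\,dV\ge\gamma_0\int_M e^u\,dV$ with $\gamma_0=1/N$. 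Crucially, $\delta_0$ and $\gamma_0$ depend only on $M$, $G$ and $k$, not on $u$.

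Finally, for each $u\in H^1_G(M)$ the sets $\Omega_1,\dots,\Omega_k$ just constructed satisfy the hypotheses of Lemma \ref{iil1.1} with $l=k$, separation $\delta_0$ and mass fraction $\gamma_0$; taking $\tilde\varepsilon=\varepsilon$ there, the resulting constant $C$ depends only on $k$, $\varepsilon$, $\delta_0$ and $\gamma_0$, hence only on $M$ and $\varepsilon$ (as $k$ is fixed). This gives
$$\log\left(\int_M e^{u-\bar u}\,dV\right)\le\left(\frac{1}{16k\pi}+\varepsilon\right)\int_M|\nabla u|^2\,dV+C$$
for all $u\in H^1_G(M)$, which is precisely \eqref{improvedmoser}. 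I emphasize once more that the argument is legitimate even though the $\Omega_i$ vary with $u$, because the constant furnished by Lemma \ref{iil1.1} is uniform over all admissible configurations sharing the same $l$, $\delta_0$ and $\gamma_0$.
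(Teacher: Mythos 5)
Your proof is correct and takes essentially the same route as the paper: find one geodesic ball of fixed radius carrying a uniform fraction of $\int_M e^{u}dV$, find $k$ points of the orbit of its center that are uniformly separated, transport the mass to balls around those points using the $G$-invariance of $e^{u}dV$, and invoke Lemma \ref{iil1.1} with $l=k$. The only divergence is in the auxiliary steps: the paper obtains the uniform separation constant by induction on $k$ through a compactness-contradiction argument and the mass concentration by a maximal-ball count over a cover with disjoint half-balls, whereas you use lower semicontinuity of the separation function $s(x)$ plus compactness for the former and a pigeonhole over a finite cover for the latter; both variants are sound.
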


\begin{proof}
The proof will be divided into two steps.
\begin{etape}
\label{ietape1ineame}
Let $u\in H^{1}_{G}(M)$ then there exists $x_{1}\in M$ (depending on $u$) such that, $\forall 0<r<i(M)$,
$$\int_{B_{r}\left(x_{i}\right)}e^{u}dV\geq C r^{2}\int_{M}e^{u}dV,$$
where $i(M)$ stands for the injectivity radius of $(M,g)$, $B_{r}\left(x_{i}\right)$ stands for the geodesic ball of radius $r$ centered in $x_i$ and $C$ is a positive constant depending on $(M,g)$.
\end{etape}
\noindent\textit{Proof of Step \ref{ietape1ineame}.} Let $0<r<i(M)$. Since $M$ is compact, there exists a finite number of points $x_{1},\ldots,x_{m}\in M$ such that
$$M=\displaystyle\bigcup_{i=1}^{m} B_{r}(x_{i})\ ;\ B_{\frac{r}{2}}(x_{i})\cap B_{\frac{r}{2}}(x_{i})=\varnothing, \ \forall i\neq j.$$
We can assume, up to relabelling the $x_i$'s, $i\in \left\{1,\ldots, m\right\}$, that $$\int_{B_{r}(x_{1})}e^{u}dV=\displaystyle \max_{i\in \left\{1,\ldots,m\right\}}\int_{B_{r}(x_{i})}e^{u}dV.$$ Therefore, we have
\begin{equation}
\label{iie1.1}
\int_{M}e^{u}dV \leq \displaystyle\sum_{i=1}^{m} \int_{B_{r}(x_{i})}e^{u}dV  \leq m \int_{B_{r}(x_{1})}e^{u}dV.
\end{equation}
On the other hand, there exists $C$, a positive constant depending only on $(M,g)$ such that, for all $ 1\leq i\leq m$,
$$\left|B_{\frac{r}{2}}(x_{i})\right|\geq C r^{2},$$
thus
\begin{equation}
\label{iie1.2}
 \left|M\right|\geq \sum_{i=1}^{m} \left|B_{\frac{r}{2}}(x_{i})\right|\geq m C r^{2}.
 \end{equation}
 Combining (\ref{iie1.1}) and (\ref{iie1.2}), we obtain
 $$\int_{B_{r}(x_{1})}e^{u}dV\geq \frac{Cr^{2}}{\left|M\right|} \int_{M}e^{u}dV.$$

\begin{etape}
\label{ietape2ineame}
Let $G$ be an isometry group such that, for all $x\in M$, $|O_G(x)|\geq k$. There exists $\delta$ depending on $M$, $G$ and $k$ such that, for all $x\in M$, there exist $k$ points $x=x_{1},\ldots,x_{k} \in O_{G}(x)$ such that
  $$|x_{i}-x_{j}|\geq \delta \ si\ i\neq j,$$
 where $|x_{i}-x_{j}|$ stands for the geodesic distance between $x_{i}$ and $x_{j}$ with respect to the metric $g$.
\end{etape}  

\noindent\textit{Proof of Step \ref{ietape2ineame}.}
 We proceed by induction on $k\geq 2$. Suppose that $k=2$. By contradiction, we assume that $\forall n \in \N^{\ast},\ \exists \ x_{n}\in M$ such that $$|x_{n}-\sigma (x_{n})|<\frac{1}{n},\ \forall \ \sigma\in G.$$ Since $M$ is compact, there exists $x\in M$ such that $|x_{n}-x|\underset{n\rightarrow +\infty}{\longrightarrow} 0$. Therefore, we find $|x-\sigma (x)|=0$ for all $\sigma\in G$. This implies that $\left|O_{G}(x)\right|=1$. This provides us a contradiction.\newline
Now suppose that the induction hypothesis holds for groups $G$ such that  $\left|O_{G}(x)\right|\geq k$ for all $x\in M$, i.e.
 \begin{eqnarray}
 \label{iie1.3}
 &&\exists \delta_{k}>0\ s.t.\ \forall x\in M,\ \exists k\ points\ x=x_{1},\ldots,x_{k} \in O_{G}(x)\ \nonumber\\ &&  \qquad such\ that\  |x_{i}-x_{j}|\geq \delta_{k} \qquad,\qquad \forall i\neq j\in \{1,\ldots ,k\}.
\end{eqnarray}
Let's consider $G$ a group such that $\left|O_{G}(x)\right|\geq k+1$ for all $x\in M$. Proceeding by contradiction, from (\ref{iie1.3}), we see that $\forall n\in \N^{\ast}$, $\exists \ x^{1}_{n}\in M$ and $x^{2}_{n},\ldots,x^{k}_{n}\in O_{G}(x^{1}_{n})$ such that
$$|x^{i}_{n}- x^{j}_{n}|\geq \delta_{k},\ \forall i\neq j\in \{1,\ldots ,k\},$$
and, $\forall \sigma\in G$,
\begin{equation}
\label{iie1.4}
\inf_{j \in \left\{1,\ldots,k\right\}}|\sigma (x^{1}_{n})-x^{j}_{n}|<\frac{1}{n}. 
\end{equation}
Since $M$ is compact, we can assume that there exist $ x^{1},\ldots,x^{k}\in M$ such that
$$ \left|x^{j}_{n}- x^{j}\right|\underset{n\rightarrow +\infty}{\longrightarrow} 0,\ \forall j\in \{1,\ldots,k\}.$$
Letting $n$ tend to $+\infty$ in (\ref{iie1.4}), we deduce that, $\forall \sigma\in G$,
$$ \inf_{j \in \left\{1,\ldots,k\right\}}|\sigma (x^{1})-x^{j}|=0 .$$
This implies that
$$O_{G}(x^{1})\subseteq \left\{x^{1},\ldots,x^{k}\right\}.$$
Therefore we get a contradiction with the fact that $\left|O_{G}(x) \right|\geq k+1$ for all $x\in M$.
\vspace{12pt}
\vspace{12pt}

\noindent\textit{Proof of Lemma \ref{iil1.2}.} 
Let $u\in H_{G}^{1}(M)$. From Step \ref{ietape1ineame}, there exists $x_{1}\in M$ such that, for all $0<r<i(M)$,
$$\int_{B_{r}\left(x_{i}\right)}e^{u}dV\geq C r^{2}\int_{M}e^{u}dV.$$
By Step \ref{ietape2ineame}, there exist a constant $\delta>0$ depending on $M$, $k$, $G$ and points $x_{2},\ldots,x_{k} \in O_{G}(x_{1})$ such that $|x_{i}-x_{j}|\geq \delta$, for all $i\neq j\in \{1,\ldots ,k\}$. We can suppose that $\delta < i(M)$. Setting $\Omega_{i}=B_{\frac{\delta}{4}}(x_{i})$, we have
\begin{equation}
\label{28juin2012e1}
dist(\Omega_{i},\Omega_{j})\geq \frac{\delta}{2}.
\end{equation}
Since $x_{j}\in O_{G}(x_{1})$ for all $j\in \{2,\ldots,k\}$, there exists $\sigma_{j}\in G$ such that $x_{j}=\sigma_{j}(x_{1})$. Using the $G$-invariance of $u$, we have, for all $j\in \{2,\ldots,k\}$,
\begin{equation}
\label{28juin2012e2}
\int_{\Omega_{j}}e^{u}dV=\int_{\Omega_{1}}e^{u}dV\geq C \delta^{2}\int_{M}e^{u}dV.
\end{equation}
From \eqref{28juin2012e1} and \eqref{28juin2012e2}, the hypothesis of Lemma \ref{iil1.1} are satisfied. The proof follows immediately.
\end{proof}

\section{Proof of Theorem \ref{isoconvergence}.}
This section is devoted to the convergence of the flow when its initial data $u_0\in C^{2+\alpha}(M)$, $\alpha\in (0,1)$, and the function $f\in C^\infty (M)$ are invariant under the action of an isometry group $G$ acting on $(M,g)$. Let $u: M\times [0,+\infty )\rightarrow \R$ be the unique global solution of \eqref{isoE:flot}. We begin by noticing, since $u_0\in C_G^{2+\alpha}(M)$, $\alpha\in (0,1)$, and $f\in C_G^\infty$, that $u(t)$ is $G$-invariant for all $t\geq 0$. First, we prove that $u(t)$, $t\geq 0$, is uniformly (in time) bounded in $H^1(M)$. 
In the following, $k$ will stand for $\displaystyle\min_{x\in M}|O_G(x)|$ and $C$ will denote constants depending on $M$, $f$, $\rho$ and $\left\|u_0\right\|_{C^{2+\alpha}(M)}$.
\begin{prop}
\label{propisoh1}
Let $\rho < 8k\pi$. Then, there exists a constant $C$ such that
\begin{equation}
\label{isoborneh1}
\left\|u(t)\right\|_{H^{1}(M)}\leq C,\ \forall t\geq 0.
\end{equation}
\end{prop}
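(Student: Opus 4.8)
The plan is to combine three facts already available: the monotonicity of the energy along the flow, the conservation of $\int_M e^{u(t)}\,dV$, and—crucially—the improved Moser-Trudinger inequality of Proposition \ref{iil1.2}, which applies because $u(t)$ is $G$-invariant for every $t$ and $\min_{x\in M}|O_G(x)|\ge k$. From \eqref{isodecen}--\eqref{decene} we have $E_f(u(t))\le E_f(u_0)=:C_0$ for all $t\ge 0$, so it suffices to extract a uniform bound on $\|u(t)\|_{H^1}$ from the single inequality $E_f(u(t))\le C_0$.

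First I would rewrite the energy in its scale-invariant form. Since adding a constant to $u$ leaves $E_f$ unchanged, writing $\bar u=\tfrac{1}{|M|}\int_M u\,dV$ and factoring $e^{\bar u}$ out of $\int_M f e^{u}\,dV$ gives
\begin{equation*}
E_f(u)=\frac12\int_M|\nabla u|^2\,dV-\rho\,\ln\Big(\int_M f e^{u-\bar u}\,dV\Big).
\end{equation*}
For $\rho>0$ I would bound $\ln\big(\int_M f e^{u-\bar u}\,dV\big)\le \ln(\max_M f)+\ln\big(\int_M e^{u-\bar u}\,dV\big)$ and apply Proposition \ref{iil1.2} with a parameter $\varepsilon>0$ to be fixed, obtaining
\begin{equation*}
E_f(u(t))\ge\Big(\frac12-\rho\Big(\frac{1}{16k\pi}+\varepsilon\Big)\Big)\int_M|\nabla u(t)|^2\,dV-C.
\end{equation*}
Because $\rho<8k\pi$ we have $\tfrac{\rho}{16k\pi}<\tfrac12$, so $\varepsilon$ can be chosen small enough that the bracketed coefficient is a positive constant $c_0$; together with $E_f(u(t))\le C_0$ this yields $\int_M|\nabla u(t)|^2\,dV\le C$ for all $t\ge 0$. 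When $\rho\le 0$ the same gradient bound is obtained more directly, using only Jensen's inequality $\int_M e^{u-\bar u}\,dV\ge|M|$ to bound the logarithmic term from below, so the improved inequality is not needed there.

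It then remains to control the mean value and hence the full $H^1$ norm. By Poincaré's inequality $\int_M(u-\bar u)^2\,dV\le \lambda_1^{-1}\int_M|\nabla u|^2\,dV\le C$, so I only need a bound on $|\bar u|$. Here I would use the conservation law \eqref{volconst}: from $\int_M e^{u(t)}\,dV=a$ and $\int_M e^{u(t)}\,dV=e^{\bar u}\int_M e^{u-\bar u}\,dV$ we get $\bar u=\ln a-\ln\big(\int_M e^{u-\bar u}\,dV\big)$. Jensen gives $\int_M e^{u-\bar u}\,dV\ge|M|$, bounding $\bar u$ from above, while the ordinary Moser-Trudinger inequality \eqref{equationmanquante2isocha} together with the gradient bound gives $\ln\big(\int_M e^{u-\bar u}\,dV\big)\le \tfrac{1}{16\pi}\int_M|\nabla u|^2\,dV+C\le C$, bounding $\bar u$ from below. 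Hence $|\bar u|\le C$, and since $\int_M u^2\,dV=\int_M(u-\bar u)^2\,dV+|M|\,\bar u^2\le C$, we conclude $\|u(t)\|_{H^1(M)}\le C$ uniformly in $t$.

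The main obstacle—and the whole point of the $G$-invariant setting—is producing the strictly positive coefficient $c_0$ in the gradient estimate. In the general (non-equivariant) case one only has the constant $\tfrac{1}{16\pi}$ in \eqref{equationmanquante2isocha}, which makes the coefficient positive only for $\rho<8\pi$; the gain by a factor $k$ in Proposition \ref{iil1.2}, valid precisely because the orbits contain at least $k$ points and hence $\int_M e^{u}\,dV$ is equidistributed over $k$ well-separated balls, is exactly what raises the threshold to $\rho<8k\pi$. Everything else (Poincaré, Jensen, the volume constraint) is routine.
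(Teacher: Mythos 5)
Your proof is correct and follows essentially the same route as the paper's: the monotonicity $E_f(u(t))\leq E_f(u_0)$ combined with the improved Moser-Trudinger inequality \eqref{improvedmoser} (and plain Jensen when $\rho\leq 0$) yields the uniform gradient bound, after which Poincar\'e's inequality, the volume conservation \eqref{volconst} and Jensen's inequality control $\bar{u}(t)$ and hence the full $H^1$ norm. The only cosmetic differences are that you make the scale-invariant rewriting of $E_f$ explicit and that you invoke the ordinary Moser-Trudinger inequality \eqref{equationmanquante2isocha} rather than \eqref{improvedmoser} for the lower bound on $\bar{u}(t)$; both variants are immaterial.
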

\begin{proof}
From \eqref{decene}, we have
\begin{equation}
\label{iie2.1}
E_{f}\left(u\left(t\right)\right)\leq E_{f}\left(u_{0}\right):=C_0,\ \forall t\geq 0 .
\end{equation}
To prove (\ref{isoborneh1}), we will consider two cases $\rho<0$ and $0\leq \rho <8k\pi$. In a first time, let's consider the case $\rho<0$. Since $f\in C^\infty(M)$ and is a strictly positive function, we have
\begin{eqnarray*}
E_{f}(u(t)) & =& \frac{1}{2}\int_{M}\left|\nabla u(t)\right|^{2}dV+ \dfrac{\rho}{|M|}\int_{M}u(t)dV-\rho \log\left(\int_{M}fe^{u(t)}dV\right)\nonumber \\
& \geq &\frac{1}{2}\int_{M}\left|\nabla u(t)\right|^{2}dV-\rho \log\left(\int_{M}e^{u(t)-\bar{u}(t)}dV\right)-C.\nonumber\\
\end{eqnarray*}
Using Jensen's inequality, 
it follows that
$$E_f(u(t))\geq \frac{1}{2}\int_{M}\left|\nabla u(t)\right|^{2}dV-C.$$
From (\ref{iie2.1}), we find
\begin{equation}
\label{iie2.2}
\int_M |\nabla u(t)|^2 dV\leq C,\ \forall t\geq 0.
\end{equation}
Now let's consider the second case $0\leq \rho< 8k\pi$. Since, as we already notice, $u(t)$ is $G-$invariant, for all $t\geq 0$, we can use the improved Moser-Trudinger's inequality \eqref{improvedmoser} of Lemma \ref{iil1.2}. This gives us

\begin{eqnarray}
\label{ie2.2'}
E_{f}(u(t)) & \geq &\frac{1}{2}\int_{M}\left|\nabla u(t)\right|^{2}dV-\rho \log\left(\int_{M}e^{u(t)-\bar{u}(t)}dV\right)-C\nonumber\\
& \geq &\left(\frac{1}{2}-\frac{\rho}{16k\pi}-\varepsilon\right)\int_{M}\left|\nabla u(t)\right|^{2}dV-C.
\end{eqnarray}

\noindent Since $0\leq \rho < 8k\pi$, we obtain, taking $\varepsilon=\dfrac{8k\pi-\rho}{32k\pi}$, using (\ref{iie2.1}) and  (\ref{ie2.2'}),
\begin{equation}
\label{equationmanquante1juillet}
\int_{M}\left|\nabla u(t)\right|^{2}dV\leq C,\ \forall t\geq 0.
\end{equation}
From \eqref{iie2.2} and \eqref{equationmanquante1juillet}, we deduce that, for all $\rho <8k\pi$,
\begin{equation}
\label{equationmanquante1}
\int_{M}\left|\nabla u(t)\right|^{2}dV\leq C,\ \forall t\geq 0.
\end{equation}
Now, using Poincaré's inequality, we get
\begin{equation}
\label{uborh1}
 \left\|u(t) -\bar{u}(t) \right\|_{H^{1}(M)}\leq C.
\end{equation}
From the improved Moser-Trudinger's inequality \eqref{improvedmoser} and (\ref{equationmanquante1}), we have
$$\int_{M}e^{u(t)-\bar{u}(t)}dV\leq C.$$
Since, $\displaystyle\int_{M}e^{u(t)}dV=\int_M e^{u_0}dV $, for all $t\geq 0$, we find
\begin{equation}
\label{ubarbor} \bar{u}(t)\geq -C,\ \forall t\geq 0.\end{equation}
On the other hand, using Jensen's inequality, we have
\begin{equation}
\label{ubarbor1}
 \bar{u}(t)\leq -C,\ \forall t\geq 0.\end{equation}
Finally, from (\ref{uborh1}), (\ref{ubarbor}) and (\ref{ubarbor1}), we conclude that 
\begin{equation}
\label{iibh1}
\left\|u(t)\right\|_{H^{1}(M)}\leq C,\ \forall  t \geq 0.
\end{equation}
From \eqref{improvedmoser} and \eqref{iibh1}, we deduce that there exists a constant $C$ (not depending on time) such that, for all $t\geq 0$ and $p\in \R$,
\begin{equation}
\label{ibep1}
\int_{M}e^{pu(t)}dV\leq C.
\end{equation} 
We also notice that integrating
$\dfrac{\partial}{\partial t}E_{f}(u(t))=-\displaystyle\int_{M} \left|\dfrac{\partial}{\partial t}u(t)\right|^{2}e^{u(t)}dV$
with respect to $t$ and since $\left\|u(t)\right\|_{H^1(M)}\leq C$, we find, for all $T_1 \geq 0$,
\begin{equation}
\label{equationmanquante3}
\int_{0}^{T_1}\int_{M}\left|\dfrac{\partial}{\partial t}u(t)\right|^{2}e^{u(t)}dVdt= E_{f}(u_{0})-E_f(u(T_1))\leq C.
\end{equation}
\end{proof}
\vspace{12pt}
\vspace{12pt}

\noindent\textit{Proof of Theorem \ref{isoconvergence}.} We follow closely Brendle \cite{MR1999924} arguments. We set $U(t)=\dfrac{\partial}{\partial t}u(t)$ and $y(t)=\displaystyle\int_{M}U^{2}(t)e^{u(t)}dV.$ We claim that
$$y(t)\underset{t\rightarrow +\infty}{\longrightarrow} 0.$$
Let $\varepsilon$ be some real positive number. In view of \eqref{equationmanquante3}, there exists $t_{0}> 0$ such that $y(t_{0})\leq \varepsilon$. We want to show that 
\begin{equation}
\label{6juin2012e2}
y(t)\leq 3\varepsilon , \ \forall t\geq t_{0}.
\end{equation}
Suppose, by contradiction, that \eqref{6juin2012e2} doesn't hold. We set
$$t_{1}=\inf \left\{\ t\geq t_{0}\ :\ y(t)\geq 3\varepsilon \right\}<+\infty.$$
By definition, we have
$$y(t)\leq 3\varepsilon, \ \forall t_{0}\leq t\leq t_{1}.$$
Since $\dfrac{\partial u(t)}{\partial t}=e^{-u(t)}\left(\Delta u(t) -\dfrac{\rho}{\left|M\right|}\right)+\dfrac{\rho f}{\int_{M}fe^{u(t)}dV}$, we find, using Young's inequality, for all $t_{0}\leq t\leq t_{1}$,
\begin{eqnarray}
\label{iie3.1}
&&\int_{M}e^{-u(t)}\left(\Delta u(t) -\frac{\rho}{\left| M\right|}\right)^{2}dV\nonumber \\
&=& y(t)-2 \dfrac{\rho}{\int_M fe^u dV}\int_M f\dfrac{\partial u}{\partial t}(t) e^{u(t)}dV+\int_{M}\left(\frac{\rho f}{\int_{M}fe^{u(t)}dV}\right)^{2}e^{u(t)}dV\nonumber \\
& \leq & C y(t)+C\int_M e^{u(t)}dV \leq  C_1,
\end{eqnarray}
where $C_1$ is a constant depending on $t_1$ and $C$ is a constant not depending on time. From (\ref{ibep1}) with $p=3$, we have, for all $ t\geq 0$,
\begin{equation}
\label{iie3.2}
\int_{M}e^{3u(t)}dV\leq C.
\end{equation}
 Using H\"{o}lder's inequality, (\ref{iie3.1}) and (\ref{iie3.2}), we obtain, for all $t_{0}\leq t\leq t_{1}$,

\begin{eqnarray*}
\int_{M}\left|\Delta u(t) -\frac{\rho}{\left|M\right|}\right|^{\frac{3}{2}}dV & \leq &\left(\int_{M}e^{-u(t)}\left(\Delta u (t)-\frac{\rho}{\left| M\right|}\right)^{2}dV  \right)^{\frac{3}{4}}\left(\int_{M}e^{3u(t)}dV \right)^{\frac{1}{4}} \\
& \leq &C_1, 
\end{eqnarray*}

\noindent therefore
\begin{equation}
\label{5dec2012e1}
 \int_{M}\left|\Delta u(t)\right|^{\frac{3}{2}}dV\leq C_1,\ \forall t_{0}\leq t\leq t_{1}.
 \end{equation}
We deduce from the Sobolev's embedding Theorem that 
\begin{equation}
\label{4juillet2012e1}
\left|u(t)\right|\leq C_1,\ \forall t_{0}\leq t\leq t_{1}.
\end{equation}
Derivating \eqref{isoE:flot} with respect to $t$, we see that $U(t)=\dfrac{\partial u(t)}{\partial t}$ satisfies
\begin{eqnarray}
\label{6juin2012e3}
\dfrac{\partial U(t)}{\partial t}&=&e^{-u(t)}\Delta U(t)-U(t)e^{-u(t)}\Delta u(t)\nonumber \\
&+&\frac{\rho}{\left|M\right|}U(t)e^{-u(t)}-\frac{\rho f}{\left(\int_{M}fe^{u(t)}dV\right)^{2}}\int_{M}U(t)fe^{u(t)}dV.
\end{eqnarray}
Now, using \eqref{6juin2012e3}, we have
\begin{eqnarray*}
\dfrac{\partial y(t)}{\partial t} & =& \dfrac{\partial }{\partial t}\left(\int_{M}U^{2}(t)e^{u(t)}dV\right) \\
& =& 2\int_{M}U(t)e^{u(t)}\left(e^{-u(t)}\Delta U(t)- U(t) e^{-u(t)} \Delta u(t) +\frac{\rho}{\left|M\right|}U(t)e^{-u(t)} \right. \nonumber\\
&-& \left.\frac{\rho f}{\left(\int_{M}fe^{u(t)}dV\right)^{2}}\int_{M}U(t)fe^{u(t)}dV \right) dV +\int_{M}U^{3}(t)e^{u(t)}dV. \nonumber
\end{eqnarray*}
Integrating by parts on $M$ and since $\Delta u(t)-\dfrac{\rho}{\left| M\right|}=U(t) e^{u(t)}-\dfrac{\rho f e^{u(t)}}{\int_{M}fe^{u(t)}dV}$, we obtain
\begin{eqnarray}
\label{lasteq}
\dfrac{\partial y(t)}{\partial t}
& =&-2 \int_{M}\left|\nabla U(t)\right|^{2}dV-2\int_{M}U^{2}(t)\left(\Delta u(t)-\frac{\rho}{\left| M\right|}\right) dV\nonumber\\
&-&\frac{2\rho}{\left(\int_{M}fe^{u(t)}dV\right)^{2}}\left(\int_{M}fU(t)e^{u(t)}dV\right)^{2} +\int_{M}U^{3}(t)e^{u(t)}dV \nonumber\\
& =&-2 \int_{M}\left|\nabla U(t)\right|^{2}dV- \int_{M}U^{3}(t)e^{u(t)}dV\nonumber\\
&+& 2\rho \left(\frac{\int_{M}fU^{2}(t)e^{u(t)}dV}{\int_{M}fe^{u(t)}dV}-\left( \frac{\int_{M}fU(t)e^{u(t)}dV}{\int_{M}fe^{u(t)}dV}\right)^{2} \right) .
\end{eqnarray}
\noindent Using the Gagliardo-Nirenberg's inequality, we get
\begin{equation}
\label{5dec2012e3}
\left\|U(t)\right\|_{L_{g_1(t)}^{3}\left(M\right)}\leq C \left\|U(t)\right\|_{L_{g_1(t)}^{2}\left(M\right)}^{\frac{2}{3}}\left\|U(t)\right\|_{H_{g_1(t)}^{1}\left(M\right)}^{\frac{1}{3}},
\end{equation}
where the norm are taken with respect to the conformal metric $g_{1}(t)=e^{u(t)}g$. Let's $\tilde{\lambda}_1(t)$ be the first eigenvalue of the laplacian with respect to the metric $g_1(t)$. By the Rayleigh quotient, we have
$$\tilde{\lambda}_1 (t)=\inf_{v\in H_{g_1(t)}^1 (M)}\dfrac{\int_M |\nabla^{g_1(t)}v|_{g_1(t)}^2dV_{g_1(t)}}{\int_M v^2 dV_{g_1(t)}}.$$
From \eqref{4juillet2012e1} and since $\displaystyle\int_M |\nabla^{g_1(t)}v|_{g_1(t)}^2dV_{g_1(t)}=\int_M |\nabla v|^2 dV$, we deduce that, $\forall t_0\leq t\leq t_1$,
\begin{equation}
\label{raleygh}
\tilde{\lambda}_1(t) \geq C_1.
\end{equation} 
From Poincaré's inequality, \eqref{raleygh} and since $\displaystyle\int_{M}U(t)e^{u(t)}dV=0$, we find, $\forall t_0\leq t\leq t_1$,
$$\int_M U^2(t) e^{u(t)}dV \leq \dfrac{1}{\tilde{\lambda}_1(t)}\int_M \left|\nabla U(t)\right|^{2}dV\leq C_1 \int_M \left|\nabla U(t)\right|^{2}dV,$$
hence, we get
\begin{equation}
\label{5dec2012e2}
\left\|U(t)\right\|_{H_{g_1(t)}^{1}\left(M\right)}\leq C_1 \left(\int_{M}\left|\nabla U(t)\right|^{2}dV\right)^{\frac{1}{2}}.
\end{equation}
Inserting \eqref{5dec2012e2} into \eqref{5dec2012e3}, we obtain 
$$\int_{M}e^{u(t)}\left|U(t)\right|^{3}dV\leq C_1 \left(\int_{M}U^{2}(t)e^{u(t)}dV\right)\left(\int_{M}\left|\nabla U(t)\right|^{2}dV\right)^{\frac{1}{2}}.$$
Putting the previous estimate into \eqref{lasteq} and using Young and Hölder's inequality, we find
\begin{eqnarray*}
\dfrac{\partial y(t)}{\partial t}&\leq & -2 \int_{M}\left|\nabla U(t)\right|^{2}dV +C_1 \left(\int_{M}U^{2}(t)e^{u(t)}dV\right)\left(\int_{M}\left|\nabla U(t)\right|^{2}dV\right)^{\frac{1}{2}}\\
&+& C \int_{M}U^{2}(t)e^{u(t)}dV+C\left(\int_{M}\left|U(t)\right|e^{u(t)}dV\right)^{2}\\
&\leq &  C_1 \left(\int_{M}U^{2}(t)e^{u(t)}dV\right)^{2}+C\left(\int_{M}U^{2}(t)e^{u(t)}dV\right).
\end{eqnarray*}
This implies, since $y(t_0)\leq \varepsilon$ and $y(t_1)=3\varepsilon$, that
$$2\varepsilon \leq y(t_{1})-y(t_{0})\leq C_1 \int_{t_{0}}^{t_{1}}y(t)dt.$$
Choosing $t_{0}$ sufficiently large, i.e. such that
$$C_1\int_{t_{0}}^{+\infty}y(t)dt\leq \varepsilon  ,$$
we obtain a contradiction. Thus, we have established that
\begin{equation}
\label{5dec2012e5}
y(t)=\int_M \left(\dfrac{\partial u}{\partial t}(t)\right)^2 e^{u(t)}dV\underset{t\rightarrow +\infty}{\longrightarrow} 0.
\end{equation}
Moreover, we get that all estimates we found during the proof, hold for all $t\geq 0$. In particular, we have that
$$\left|u(t)\right|\leq C,\ \forall t\geq 0.$$
Thus, from the previous estimate, \eqref{5dec2012e5} and using Young's inequality, we deduce that
\begin{eqnarray*}
\int_{M}\left(\Delta u(t) -\frac{\rho}{\left| M\right|}\right)^{2}dV&=&\int_M \left(\dfrac{\partial e^{u(t)}}{\partial t}-\dfrac{f e^{u(t_n)}}{\int_{M}fe^{u(t_n)}dV}\right)^2\\
&\leq & C y(t)+C\leq C,\ \forall t\geq 0.
\end{eqnarray*}
This implies that
$$\left\|u(t)\right\|_{H^{2}(M)}\leq C,\ \forall t\geq 0.$$
Therefore, there exist  $u_{\infty}\in H^{2}(M) $ and a sequence $\left(t_{n}\right)_n$, $t_n \underset{n\rightarrow +\infty}{\longrightarrow} +\infty$ such that
$$u(t_{n})\underset{n\rightarrow +\infty}{\longrightarrow} u_{\infty}\ weakly\ in\ H^{2}(M),$$
and
\begin{equation}
\label{5dec2012e6}
u(t_{n})\underset{n\rightarrow +\infty}{\longrightarrow} u_{\infty}\ \ in\ C^{\alpha}(M),\ \alpha\in (0,1) .
\end{equation}
It is easy to check that $u_{\infty}$ is a weak solution of
\begin{equation*}
\label{eqfin1}
\Delta u_{\infty} -\frac{\rho}{\left|M\right|}+\frac{\rho f e^{u_{\infty}}}{\int_{M}fe^{u_{\infty}}dV}=0,
\end{equation*}
and, by boothstrap regularity arguments, we have $u_{\infty}\in C^\infty (M)$. We claim that $\left\|u(t_{n})- u_{\infty}\right\|_{H^2(M)}\underset{n\rightarrow +\infty}{\longrightarrow} 0$ strongly in $H^{2}(M)$. To prove the claim, in view of \eqref{5dec2012e6}, it is sufficient to show that $\displaystyle\int_M \left(\Delta u(t_n) -\Delta u_\infty \right)^2 dV\underset{n\rightarrow +\infty}{\longrightarrow} 0$. Using Hölder's inequality, we have 
\begin{eqnarray}
\label{5dec2012e10}
\int_M \left(\Delta u(t_n) -\Delta u_\infty \right)^2 dV
&=&\int_M \left(\rho \left(\dfrac{f e^{u_{\infty}}}{\int_{M}fe^{u_{\infty}}dV}-\dfrac{f e^{u(t_n)}}{\int_{M}fe^{u(t_n)}dV} \right)+ \dfrac{\partial e^{u(t_n)}}{\partial t} \right)^2 dV\nonumber\\
&\leq & C \int_M \left(\dfrac{ e^{u_{\infty}}}{\int_{M}fe^{u_{\infty}}dV}-\dfrac{ e^{u(t_n)}}{\int_{M}fe^{u(t_n)}dV} \right)^2dV\nonumber \\
&+& C \int_M \left(\dfrac{\partial e^{u(t_n)}}{\partial t} \right)^2 dV.
\end{eqnarray}
Since $|u(t)|\leq C$, $\forall t\geq 0$, we deduce from \eqref{5dec2012e5} that
\begin{equation}
\label{5dec2012e11}
\int_M \left(\dfrac{\partial e^{u(t_n)}}{\partial t} \right)^2 dV\underset{n\rightarrow +\infty}{\longrightarrow}0. 
\end{equation}
Let's denote $\beta = \dfrac{\int_{M}fe^{u(t_n)}dV}{\int_{M}fe^{u_\infty}dV}$. Using the estimate $\left|e^{x}-1\right|\leq \left|x\right| e^{x}$ and Hölder's inequality, we have
\begin{eqnarray*}
&&\int_{M}\left(\dfrac{ e^{u_{\infty}}}{\int_{M}fe^{u_{\infty}}dV}-\dfrac{ e^{u(t_n)}}{\int_{M}fe^{u(t_n)}dV} \right)^2dV\\
& =&\dfrac{1}{\left(\int_{M}fe^{u(t_n)}dV\right)^2}\int_{M}e^{2u\left(t_{n}\right)}\left|e^{u_{\infty}-u(t_n)+\ln \beta}-1\right|^2 dV \\
& \leq & C\int_{M}e^{2u\left(t_{n}\right)}  \left|u_\infty- u(t_n)+\ln \beta\right|^2 e^{2\left(u_{\infty}-u(t_n)+\ln \beta\right)} dV \\
& \leq &C \left(\int_{M}e^{8u\left(t_{n}\right)}dV\right)^{\frac{1}{4}}\left( \int_{M} \left|u_\infty- u(t_n)+\ln \beta\right|^{2} dV\right)^{\frac{1}{2}}\\
&\times &\left(\int_{M}e^{8(u_\infty- u(t_n)+\ln \beta)} dV\right)^{\frac{1}{4}}.
\end{eqnarray*}
From the previous inequality, \eqref{5dec2012e6} and since $\displaystyle\int_{M}fe^{u(t_n)}dV\underset{n\rightarrow +\infty}{\longrightarrow}\int_{M}fe^{u_{\infty}}dV$, we obtain that
\begin{equation}
\label{5dec2012e12}
\int_{M}\left(\dfrac{ e^{u_{\infty}}}{\int_{M}fe^{u_{\infty}}dV}-\dfrac{ e^{u(t_n)}}{\int_{M}fe^{u(t_n)}dV} \right)^2dV\underset{n\rightarrow +\infty}{\longrightarrow} 0.
\end{equation}
Therefore, from \eqref{5dec2012e10}, \eqref{5dec2012e11} and \eqref{5dec2012e12}, we deduce that
$$\int_M \left(\Delta u(t_n) -\Delta u_\infty \right)^2 dV\underset{n\rightarrow +\infty}{\longrightarrow} 0.$$
Thus, we proved that $\left\|u(t_{n})- u_{\infty}\right\|_{H^2(M)}\underset{n\rightarrow +\infty}{\longrightarrow} 0$ strongly in $H^{2}(M)$.
Finally, since the flow \eqref{isoE:flot} is a gradient flow and $E_f$ is real analytic, by using a general result from Simon \cite{MR727703}, we derive that
$$\left\|u(t)-u_\infty \right\|\underset{t\rightarrow +\infty}{\longrightarrow}0.$$
This concludes the proof of Theorem \ref{isoconvergence}.
\bibliographystyle{plain}
\bibliography{biblio}
\end{document}